\documentclass[letterpaper,11pt]{article}

\usepackage{tikz}
\usepackage{booktabs}
\usepackage{multirow}
\newcommand{\Cplug}{\hat{C}_{\rm seen}}
\newcommand{\ones}{\mathbf{1}}
\newcommand{\DistinctElements}{\textsf{Distinct Elements}\xspace}
\newcommand{\SupportSize}{\textsf{Support Size}\xspace}

\newcommand{\poly}{\mathsf{poly}}

\usepackage{footnote}
\makesavenoteenv{tabular}
\makesavenoteenv{table}

\usepackage[
            CJKbookmarks=true,
            bookmarksnumbered=true,
            bookmarksopen=true,
            colorlinks=true,
            citecolor=red,
            linkcolor=blue,
            anchorcolor=red,
            urlcolor=blue
            ]{hyperref}

\usepackage{ece-paper}
\usepackage{soul}

\newcommand{\Th}{{\rm th}}


\title{Sample complexity of the distinct elements problem}
\author{Yihong Wu\thanks{Department of Statistics and Data Science, Yale University, New Haven, CT, USA, email:~\texttt{yihong.wu@yale.edu}.}
~~~~~~~ Pengkun Yang\thanks{Corresponding author, Department of Electrical and Computer Engineering and the Coordinated Science Lab, University of Illinois at Urbana-Champaign, Urbana, IL, USA, email:~\texttt{pyang14@illinois.edu}.}}

\date{\today}

\begin{document}

\maketitle
\begin{abstract}
    We consider the distinct elements problem, where the goal is to estimate the number of distinct colors in an urn containing $ k $ balls based on $n$ samples drawn with replacements.
Based on discrete polynomial approximation and interpolation, we propose an estimator with additive error guarantee that achieves the optimal sample complexity within $O(\log\log k)$ factors, and in fact within constant factors for most cases. 
The estimator can be computed in $O(n)$ time for an accurate estimation.
The result also applies to sampling without replacement provided the sample size is a vanishing fraction of the urn size.

One of the key auxiliary results is a sharp bound on the minimum singular values of a real rectangular Vandermonde matrix, which might be of independent interest.


\end{abstract}

\paragraph{Keywords}  
sampling large population, nonparametric statistics, discrete polynomial approximation, orthogonal polynomials, Vandermonde matrix, minimaxity

\paragraph{AMS 2010 subject classifications} Primary:
62G05; 
secondary:
62C20, 
62D05,  
41A05,  
41A10 

\newpage

\tableofcontents

\graphicspath{{figures/}}
\section{The \DistinctElements problem}
\label{sec:intro}
The \DistinctElements problem \cite{CCMN00} refers to the following question:
\begin{quote}
\emph{Given $ n $ balls randomly drawn from an urn containing $ k $ colored balls, how to estimate the total number of distinct colors in the urn?}
\end{quote}
Originating from ecology, numismatics, and linguistics, this problem is also known as the \emph{species problem} in the statistics literature \cite{Lo92,BF93}.
Apart from the theoretical interests, it has a wide array of applications in various fields, such as estimating the number of species in a population of animals \cite{FCW43,Good1953}, the number of dies used to mint an ancient coinage \cite{Esty86}, and the vocabulary size of an author \cite{ET76}.
In computer science, this problem frequently arises in large-scale databases, network monitoring, and data mining \cite{RRSS09,BJKST02,CCMN00}, where the objective is to estimate the types of database entries or IP addresses from limited observations, since it is typically impossible to have full access to the entire database or keep track of all the network traffic.
The key challenge in the \DistinctElements problem is the following: given a small set of samples where most of the colors are not observed, how to accurately extrapolate the number of unseens?




\subsection{Main results}
The fundamental limit of the \DistinctElements problem is characterized by the sample complexity, i.e., the smallest sample size needed to estimate the number of distinct colors with a prescribed accuracy and confidence level. A formal definition is the following:
\begin{definition}
    \label{def:sample}
    The sample complexity $ n^*(k,\Delta) $ is the minimal sample size $ n $ such that there exists an integer-valued estimator $ \hat C $ based on $n$ balls
                drawn independently with replacements from the urn, such that $\Prob[|\hat C - C| \ge \Delta ] \leq 0.1$ for any urn containing $ k $ balls with $ C $ different colors.\footnote{Clearly, since $ \hat C - C \in \integers $, we shall assume without loss of generality that $ \Delta\in\naturals $, with $ \Delta=1 $ corresponding to the exact estimation of the number of distinct elements.}
\end{definition}

The main results of this paper provide bounds and constant-factor approximations of the sample complexity in various regimes summarized in \prettyref{tab:main}, as well as computationally efficient algorithms.
Below we highlight a few important conclusions drawn from \prettyref{tab:main}:

\begin{description}
    \item[From linear to sublinear: ] From the result for $ k^{0.5+\delta}\le \Delta \le ck $ in \prettyref{tab:main}, we conclude that the sample complexity is sublinear in $k$ if and only if $ \Delta=k^{1-o(1)} $, which also holds for sampling without replacement.
    To estimate within a constant fraction of balls $ \Delta=ck $ for any small constant $ c $, the sample complexity is $ \Theta(\frac{k}{\log k}) $, which coincides with the general support size estimation problem \cite{VV11,WY15} (see \prettyref{sec:related} for a detailed comparison). However, in other regimes we can achieve better performance by exploiting the discrete nature of the \DistinctElements problem.
    
    \item[From linear to superlinear: ] 
                The transition from linear to superlinear sample complexity occurs near $ \Delta=\sqrt{k} $. 
    Although the exact sample complexity near $ \Delta=\sqrt{k} $ is not completely resolved in the current paper, the lower bound and upper bound in \prettyref{tab:main} differ by a factor of at most $ \log\log k $. In particular, the estimator via interpolation can achieve $ \Delta=\sqrt{k} $ with $ n=O(k\log\log k) $ samples, and achieving a precision of $ \Delta\le k^{0.5-o(1)} $ requires strictly superlinear sample size.
\end{description}

\begin{table}[h]
    \centering
    \begin{tabular}{|c|c|c|c|}  
      \toprule
      $ \Delta $                               & Lower bound                        & Upper bound                             & Estimator \\
      \midrule
      $ \le 1 $                               & \multicolumn{2}{c|}{$ \Theta(k\log k) $}                                     & Na\"ive\\
      \hline
                                               & \multicolumn{2}{c|}{\multirow{3}{*}{$ \Theta\pth{k\log \frac{k}{\Delta^2}} $}}   & \\
      $ \qth{1,\sqrt{k}(\log k)^{-\delta}} $         & \multicolumn{2}{c|}{}                                                        & Interpolation \\
                                               & \multicolumn{2}{c|}{}                                                        & (\prettyref{sec:lag})    \\
      \cline{1-3}
      $ \qth{\sqrt{k}(\log k)^{-\delta},k^{0.5+\delta}} $ & $ \Omega\pth{k \pth{1\vee \log \frac{k}{\Delta^2}}} $ & $ O\pth{k\log\frac{\log k}{1 \vee \log\frac{\Delta^2}{k}}} $ &  \\
      \hline
                                                & \multicolumn{2}{c|}{\multirow{3}{*}{$ \Theta\pth{\frac{k}{\log k}\log\frac{k}{\Delta}} $}} &  \\
      $[k^{^{0.5+\delta}},ck]$                     & \multicolumn{2}{c|}{}                                                        & $\ell_2$-approximation\\
                                                & \multicolumn{2}{c|}{}                                                        & (\prettyref{sec:lse})\\
      \cline{1-3}
      $ [ck,(0.5-\delta)k] $                    & $k \exp(-\sqrt{O(\log k \log \log k)})  $\cite{RRSS09}\footnote{A more precise result from \cite{RRSS09} is the following: for $ \Delta\in [ck,0.5k - 2k^{3/4}\sqrt{\log k}] $, $ n^*(k,\Delta)\ge k \exp(-\sqrt{O(\log k (\log \log k+\log\frac{k}{k/2-\Delta}))}) $.}  & $O\pth{\frac{k}{\log k}}$  &    \\
      \bottomrule
    \end{tabular}
    \caption[Summary]{Summary of the sample complexity $ n^*(k,\Delta) $, where $ \delta $ is any sufficiently small constant, $c$ is an absolute positive constant less than 0.5 (same over the table), and the notations $a\wedge b$ and $a\vee b$ stand for $\min\{a,b\}$ and $\max\{a,b\}$, respectively. 
        The estimators are linear with coefficients obtained from either interpolation or $\ell_2$-approximation.
        \label{tab:main}}
\end{table}

To establish the sample complexity, our lower bounds are obtained under zero-one loss and our upper bounds are under the (stronger) quadratic loss.
Hence we also obtain the following characterization of the minimax mean squared error (MSE) of the \DistinctElements problem:
\begin{align*}
  \min_{\hat C}\max_{k\text{-ball urn}}\Expect\pth{\frac{\hat C-C}{k}}^2
  & =\exp\sth{-\Theta\pth{\pth{1\vee \frac{n\log k}{k}} \wedge \pth{\log k \vee \frac{n}{k}} }}\\
  & =\begin{cases}
      \Theta(1) ,& n\le \frac{k}{\log k},\\
      \exp(-\Theta(\frac{n\log k}{k})) ,& \frac{k}{\log k}\le n\le k,\\
      \exp(-\Theta(\log k)) ,& k\le n \le k\log k,\\
      \exp(-\Theta(\frac{n}{k})) , &n \ge k\log k,
  \end{cases}
\end{align*}
where $ \hat C $ denotes an estimator using $ n $ samples with replacements and $ C $ is the number of distinct colors in a $k$-ball urn.

\subsection{Related work}
\label{sec:related}
\paragraph{Statistics literature}

The \DistinctElements problem is equivalent to estimating the number of species (or classes) in a finite population, which has been extensively studied in the statistics (see surveys \cite{BF93,GS04}) and the numismatics literature (see survey \cite{Esty86}).
Motivated by various practical applications, a number of statistical models have been introduced for this problem, the most popular four being (cf. \cite[Figure 1]{BF93}):
\begin{itemize}
    \item \emph{The multinomial model}: $ n $ samples are drawn uniformly at random with replacement;
    \item \emph{The hypergeometric model}: $ n $ samples are drawn uniformly at random without replacement;
    \item \emph{The Bernoulli model}: each individual is observed independently with some fixed probability, and thus the total number of samples is a binomial random variable;
    \item \emph{The Poisson model}: the number of observed samples in each class is independent and Poisson distributed, and thus the total sample size is also a Poisson random variable.
\end{itemize}
These models are closely related: conditioned on the sample size, the Bernoulli model coincides with the hypergeometric one, and Poisson model coincides with the multinomial one; furthermore, hypergeometric model can simulate multinomial one and is hence more informative.
The multinomial model is adopted as the main focus of this paper and the sample complexity in \prettyref{def:sample} refers to the number of samples with replacement.
In the undersampling regime where the sample size is significantly smaller than the population size, all four models are approximately equivalent.
See \prettyref{app:model} for a rigorous justification and detailed comparisons.

Under these models various estimators have been proposed  such as unbiased estimators \cite{goodman1949}, Bayesian estimators \cite{Hill79}, variants of Good-Turing estimators \cite{Chao92}, etc. None of these methodologies, however, have a provable worst-case guarantee. 
Finally, we mention a closely related problem of estimating the number of connected components in a graph based on sampled induced subgraphs. 
In the special case where the underlying graph consists of disjoint cliques, the problem is exactly equivalent to the \DistinctElements problem \cite{Frank78}.

\paragraph{Computer science literature}
The interests in the \DistinctElements problem also arise in the database literature, where various intuitive estimators \cite{HOT88,NS90} have been proposed under simplifying assumptions such as uniformity, and few performance guarantees are available.
More recent work in \cite{CCMN00,BKS01} obtained the optimal sample complexity under the \emph{multiplicative} error criterion, where the minimum sample size to estimate the number of distinct elements within a factor of $\alpha $ is shown to be $ \Theta(k/\alpha^2) $.
For this task, it turns out the least favorable scenario is to distinguish an urn with unitary color from one with \emph{almost} unitary color, the impossibility of which implies large multiplicative error.
However, the optimal estimator performs poorly compared with others on an urn with many distinct colors \cite{CCMN00}, the case where most estimators enjoy small multiplicative error.
In view of the limitation of multiplicative error, additive error is later considered by \cite{RRSS09,Valiant11}.
To achieve an additive error of $ ck $ for a constant $ c\in(0,\frac{1}{2}) $, the result in \cite{CCMN00} only implies an $ \Omega(1/c) $ sample complexity lower bound, whereas a much stronger lower bound scales like $ k^{1 - O(\sqrt{\frac{\log\log k}{\log k}})} $ obtained in \cite{RRSS09}, which is almost linear.
Determining the optimal sample complexity under additive error is the focus of the present paper.

The \DistinctElements problem can be viewed as a special case of the \SupportSize problem, where the goal is to estimate the cardinality of the support of an unknown discrete distribution, whose nonzero probabilities are at least $ \frac{1}{k} $, based on independent samples. Improving previous results in \cite{VV11}, the optimal sample complexity has been recently determined in \cite{WY15} to be 
\begin{equation}
\Theta\pth{\frac{k}{\log k}\log^2\frac{k}{\Delta}}.
\label{eq:sample-supp}
\end{equation}
 Samples drawn from a $ k $-ball urn with replacement can be viewed as \iid samples from a distribution supported on the set $ \{\frac{1}{k},\frac{2}{k},\dots,\frac{k}{k}\} $.
From this perspective, any support size estimator, as well as its performance guarantee, is applicable to the \DistinctElements problem.

We briefly describe and compare the strategy to construct estimators in \cite{WY15} and the current paper.
Both are based on the idea of \emph{polynomial approximation}, a powerful tool to circumvent the nonexistence of unbiased estimators \cite{LNS99}.
The key is to approximate the function to be estimated by a polynomial, whose degree is chosen to balance the approximation error (bias) and the estimation error (variance).
The worst-case performance guarantee for the \SupportSize problem 
in \cite{WY15} is governed by the uniform approximation error over an interval where the probabilities may reside.
In contrast, in the \DistinctElements problem, samples are generated from a distribution supported on a \emph{discrete} set of values.
Uniform approximation over a discrete subset leads to smaller approximation error and, in turn, improved sample complexity.
It turns out that $ O(\frac{k}{\log k}\log\frac{k}{\Delta}) $ samples are sufficient to achieve an additive error of $ \Delta $ that satisfies $ k^{0.5+O(1)} \le \Delta \le O(k) $, which strictly improves the 
sample complexity \prettyref{eq:sample-supp} for the \SupportSize problem, thanks to the discrete structure of the \DistinctElements problem.


The \DistinctElements problem considered here is not to be confused with the formulation in the streaming literature, where the goal is to approximate the number of distinct elements in the observations with low space complexity, see, e.g., \cite{FFGM07,KNW10}.
There, the proposed algorithms aim to optimize the memory consumption, but still require a full pass of every ball in the urn.
This is different from the setting in the current paper, where only random samples drawn from the urn are available.

\subsection{Organization}
The paper is organized as follows:
In \prettyref{sec:ub} we describe a unified approach to construct estimators via discrete polynomial approximation, whose bias is analyzed in \prettyref{sec:lse} and variance is upper bounded in Sections \ref{sec:sigma-min} and \ref{sec:lag} separately.
In \prettyref{sec:lb} we obtain lower bounds on the sample complexity in \prettyref{tab:main} which establish the optimality of the proposed estimators.
\prettyref{sec:table} explains how sample complexity bounds summarized  in \prettyref{tab:main} follow from various results in Sections \ref{sec:ub} and \ref{sec:lb}.
Connections between the four sampling model mentioned in \prettyref{sec:related} are detailed in \prettyref{app:model}. 
Proofs of auxiliary results are deferred to \prettyref{app:corr} and  
\prettyref{app:aux}.

\subsection{Notations}
All logarithms are with respect to the natural base. 
The transpose of a matrix $ A $ is denoted by $ A^\top $.
Let $ \ones $ denote the all-one column vector.
Let $\|\cdot\|_p$ denote the vector $\ell_p$-norm, for $1 \leq p\leq \infty$.
Let $ \Poi(\lambda) $ be the Poisson distribution with mean $ \lambda $, $ \Bern(p) $ be the Bernoulli distribution with mean $ p $, $ \Binom(n,p) $ be the binomial distribution with $ n $ trials and success probability $ p $, and $ \Hypergeo(N,K,n) $ be 
the hypergeometric distribution with probability mass function $\binom{K}{k}\binom{N-K}{n-k}/{\binom{N}{n}}$, 
for $0\vee (n+K-N) \leq k \leq n\wedge K$.
The $ n $-fold product of a distribution $ P $ is denoted by $ P^{\otimes n} $.
We use standard big-$ O $ notations: for any positive sequence $ \{a_n\} $ and $ \{b_n\} $, $ a_n=O(b_n) $ or $ a_n\lesssim b_n $ if $ a_n\le cb_n $ for some absolute constant $ c>0 $, or equivalently, $\sup_n \frac{a_n}{b_n} < \infty$; $ a_n=\Omega(b_n) $ or $ a_n\gtrsim b_n $ if $ b_n= O(a_n) $; $ a_n=\Theta(b_n) $ or $ a_n\asymp b_n $ if both $ a_n=O(b_n) $ and $ b_n=O(a_n) $; $ a_n=o(b_n) $ if $ \lim a_n/b_n=0 $; $ a_n=\omega(b_n) $ if $ b_n=o(a_n) $.
Furthermore, the subscript in $o_n(1)$ indicates convergence in $n$ that is uniform in all other parameters.
We use the notations $a\wedge b$ and $a\vee b$ for $\min\{a,b\}$ and $\max\{a,b\}$, respectively.
For $M\in\naturals$, let $[M]\triangleq \{1,\dots,M\}$.
For $\alpha\in\reals$ and $S\subset \reals$, let $\alpha S\triangleq \{\alpha x:x\in S\}$.

\section{Linear estimators via discrete polynomial approximation}
\label{sec:ub}
In this section we develop a unified framework to construct linear estimators and analyze its performance. Note that linear estimators (i.e.~linear combinations of fingerprints) have been previously used for estimating distribution functionals \cite{Paninski04,VV11,VV11-focs,WY15}.
As commonly done in the literature, we assume the \emph{Poisson sampling model}, where the sample size is a random variable $ \Poi(n) $ instead of being exactly $ n $.
Under this model, the histograms of the samples, which count the number of balls in each color, are independent which simplifies the analysis.
Any estimator under the Poisson sampling model can be easily modified for fixed sample size, and vice versa, thanks to the concentration of the Poisson random variable near its mean.
Consequently, the sample complexities of these two models are close to each other, as shown in \prettyref{cor:nstar-connection} in \prettyref{app:model}.

\subsection{Performance guarantees for general linear estimators}
        Recall that $C$ denotes the number of distinct colors in a urn containing $k$ colored balls.
        Let $k_i$ denote the number of balls of the $ i\Th$ color in the urn. Then $\sum_i k_i = k$ and $C = \sum_i \indc{k_i > 0}$.
        Let $X_1,X_2,\ldots$ be independently drawn with replacement from the urn. Equivalently, the $X_i$'s are \iid according to a distribution $P = (p_i)_{i\geq 1}$, where $p_i=k_i/k$ is the fraction of balls of the $i\Th$ color.
        The observed data are $X_1,\ldots,X_N$, where the sample size $N$ is independent from $(X_i)_{i\geq 1}$ and is distributed as $\Poi(n)$.
        Under the Poisson model (or any of the sampling models described in \prettyref{sec:related}), the \emph{histograms} $\{N_i\}$ are sufficient statistics for inferring any aspect of the urn configuration; here $ N_i $ is the number of balls of the $ i\Th$ color observed in the sample, which is independently distributed as 
        $ \Poi(np_i) $.
        Furthermore, the \emph{fingerprints} $\{\Phi_j\}_{j\geq 1}$, which are the histogram of the histograms,         are also sufficient for estimating any permutation-invariant distributional property \cite{Paninski03,Valiant11}, in particular, the number of colors.
        Specifically, the $j$th fingerprint $ \Phi_j $ denotes the number of colors that appear exactly $ j $ times.
Note that $U \triangleq \Phi_0 $, the number of unseen colors, is not observed.

The na\"ive estimator, ``what you see is what you get,'' is simply the number of observed distinct colors, which can be expressed in terms of fingerprints as
\begin{equation*}
    \Cplug = \sum_{j\ge 1}\Phi_j,
\end{equation*}
This is typically an underestimator because $C = \Cplug + U$. In turn, our estimator is
\begin{equation}
    \tilde{C} = \Cplug + \hat{U},
    \label{eq:tildeC-correction}
\end{equation}
which adds a linear correction term
\begin{equation}
                \hat{U}=\sum_{j\geq 1} u_j\Phi_j,
    \label{eq:hatU}
\end{equation}
where the coefficients $u_j$'s are to be specified.
Since the fingerprints $ \Phi_0,\Phi_1,\dots $ are dependent (for example, they sum up to $C$),  \prettyref{eq:hatU} serves as a linear predictor of $U=\Phi_0$ in terms of the observed fingerprints.
Equivalently, in terms of histograms, the estimator has the following decomposable form:
\begin{equation}
    \tilde{C}=\sum_{i=1}^\infty g(N_i),
    \label{eq:tildeC-sum}
\end{equation}
where $g:\integers_+ \to \reals$ satisfies  $ g(0)=0 $ and $ g(j)=1+u_j $ for $j \geq 1$.
In fact, any estimator that is linear in the fingerprints can be expressed of the decomposable form \prettyref{eq:tildeC-sum}.


The main idea to choose the coefficients $ u_j $ is to achieve a good trade-off between the variance and the bias.
In fact, it is instructive to point out that linear estimators can easily achieve exactly zero bias, which, however, comes at the price of high variance. To see this, note that the bias of the estimator \prettyref{eq:tildeC-sum} is 
$\Expect[\tilde C]-C = \sum_{i\ge 1} (\Expect[g(N_i)]-1)$, where
\begin{equation}
    |\Expect[g(N_i)-1]|
    =e^{-np_i}\abs{-1+\sum_{j= 1}^\infty k_i^j\frac{u_j(n/k)^j}{j!}}
                \leq e^{-n/k}\max_{a\in[k]}\abs{\phi(a)-1},
    \label{eq:lp-full}
\end{equation}
and $\phi(a) \triangleq \sum_{j\ge 1}a^j\frac{u_j(n/k)^j}{j!}$ is a (formal) power series with $\phi(0)=0$.
The right-hand side of \prettyref{eq:lp-full} can be made zero by choosing $\phi$ to be, \eg, the Lagrange interpolating polynomial that satisfies $ \phi(0)=-1 $ and $ \phi(i)=0 $ for $ i\in[k] $, namely, $\phi(a) = \frac{(-1)^{k+1}}{k!}\prod_{i=1}^k (a-i)$; however, this strategy results in a high-degree polynomial $\phi$ with large coefficients, which, in turn, leads to a large variance of the estimator.

To reduce the variance of our estimator, we only use the first $L$ fingerprints in \prettyref{eq:hatU} by setting $u_j = 0$ for all $j > L$, where $L$ is chosen to be proportional to $\log k$.
This restricts the polynomial degree in \prettyref{eq:lp-full} to at most $ L $ and, while possibly incurring bias, reduces the variance.
A further reason for only using the first few fingerprints is that higher-order fingerprints are \emph{almost uncorrelated} with the number of unseens $\Phi_0$. For instance, if red balls are observed for $n/2$ times, the only information this reveals is that approximately half of the urn are red. In fact, the correlation between $\Phi_0$ and $\Phi_j$
decays exponentially (see \prettyref{app:corr} for a proof). 
Therefore for $L=\Theta(\log k)$, $\{\Phi_j\}_{j > L}$ offer little predictive power about $\Phi_0$.
Moreover, if a color is observed at most $L$ times, say, $ N_i\le L $, this implies that, with high probability, 
$ k_i\le M $, where $ M = O(kL/n)$, thanks to the concentration of Poisson random variables.
Therefore, effectively we only need to consider those colors that appear in the urn for at most $M$ times, i.e., $ k_i\in [M] $, 
for which the bias is at most
\begin{equation}
    |\Expect[g(N_i)-1]|
                \le e^{-n/k}\max_{a\in[M]} \abs{\phi(a)-1} = e^{-n/k}\max_{x\in[M]/M} \abs{p(x)-1} 
    = e^{-n/k} \norm{Bw-\ones}_{\infty},    
    \label{eq:lp}
\end{equation}
where $p(x) \triangleq \phi(Mx) = \sum_{j=1}^L w_j x^j$, $ w=(w_1,\dots,w_L)^\top $, and
\begin{equation}
    w_j 
                \triangleq \frac{u_j(Mn/k)^j}{j!},\quad
    B \triangleq 
    \begin{pmatrix}
        1/M & (1/M)^2 & \cdots & (1/M)^L  \\
        2/M & (2/M)^2 & \cdots & (2/M)^L  \\
        \vdots  & \vdots  & \ddots & \vdots  \\
        1 & 1 & \cdots & 1
    \end{pmatrix}
    \label{eq:Bw}
\end{equation}
is a (partial) Vandermonde matrix.
Lastly, since $ \Cplug\le C\le k $, we define the final estimator to be $\tilde C$ projected to the interval $[\Cplug, k]$.
We have the following error bound:
\begin{prop}
    \label{prop:rate-w}
    Assume the Poisson sampling model.
    Let
    \begin{equation}
        L=\alpha \log k, \quad M=\frac{\beta  k\log k}{n},
                                \label{eq:LM}
    \end{equation}
    for any $ \beta > \alpha $ such that $ L $ and $ M $ are integers. Let $ w\in\reals^{L} $.
                Let $\tilde C$ be defined in \prettyref{eq:tildeC-correction} with $u_j = w_j j! (\frac{k}{nM})^j$ for $j\in [L]$ and $u_j=0$ otherwise.
                Define $ \hat C\triangleq(\tilde C \vee \Cplug)\wedge k $.
    Then
    \begin{equation}
        \Expect{(\hat C-C)^2}
        \le k^2 e^{-2n/k}\norm{Bw-\ones}_{\infty}^2+ke^{-n/k}+k\max_{m\in[M]}\Expect_{N\sim\Poi(nm/k)}[u_{N}^2]+k^{-(\beta -\alpha \log\frac{e\beta }{\alpha }-3)}.
        \label{eq:rate-w}
    \end{equation}
\end{prop}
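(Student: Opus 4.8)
The plan is to split the mean squared error according to whether every over-represented color is observed often enough: the rare exceptional event is dispatched crudely using the projection onto $[\Cplug,k]$, while on the typical event the analysis collapses to a bias--variance decomposition over only those colors appearing at most $M$ times in the urn.

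Two preliminary remarks. Since $\Cplug\le C\le k$ deterministically, $x\mapsto(x\vee\Cplug)\wedge k$ is the projection onto an interval containing $C$, so $|\hat C-C|\le|\tilde C-C|$ pointwise, and in particular $|\hat C-C|\le k$ always. Partition the colors present in the urn into the \emph{light} colors $S\triangleq\{i:1\le k_i\le M\}$ and the \emph{heavy} colors $\{i:k_i>M\}$; each class has at most $k$ members. Let $E$ be the event that $N_i\le L$ for some heavy color. For a heavy color, $N_i\sim\Poi(\lambda_i)$ with $\lambda_i=np_i=nk_i/k> nM/k=\beta\log k>L$; since $\lambda\mapsto\Prob[\Poi(\lambda)\le L]$ is nonincreasing, the Chernoff bound $\Prob[\Poi(\lambda)\le L]\le e^{-\lambda}(e\lambda/L)^L$ (valid for $\lambda>L$) gives $\Prob[N_i\le L]\le k^{-\beta}(e\beta/\alpha)^{\alpha\log k}=k^{-(\beta-\alpha\log\frac{e\beta}{\alpha})}$. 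A union bound over the $\le k$ heavy colors yields $\Prob[E]\le k^{-(\beta-\alpha\log\frac{e\beta}{\alpha}-1)}$, hence $\Expect[(\hat C-C)^2\indc{E}]\le k^2\Prob[E]\le k^{-(\beta-\alpha\log\frac{e\beta}{\alpha}-3)}$, the last term of \eqref{eq:rate-w}.

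On $E^c$, every heavy color has $N_i>L$, so $g(N_i)=1+u_{N_i}=1$ because $u_j=0$ for $j>L$; thus on $E^c$ the estimator coincides with $\tilde C'\triangleq C_{>M}+\sum_{i\in S}g(N_i)$, where $C_{>M}$ is the (deterministic) number of heavy colors. As $C=C_{>M}+|S|$, we have $\tilde C'-C=\sum_{i\in S}(g(N_i)-1)$, and the projection bound gives $\Expect[(\hat C-C)^2\indc{E^c}]\le\Expect[(\tilde C'-C)^2]$. Under the Poisson model the $N_i$'s are independent, so
\[
\Expect[(\tilde C'-C)^2]=\Bigl(\sum_{i\in S}\Expect[g(N_i)-1]\Bigr)^2+\sum_{i\in S}\mathrm{Var}(g(N_i)).
\]
For the squared bias, \eqref{eq:lp} applies to each $i\in S$ (since then $k_i\in[M]$), giving $|\Expect[g(N_i)-1]|\le e^{-n/k}\|Bw-\ones\|_\infty$; summing over the $\le k$ light colors produces the first term of \eqref{eq:rate-w}. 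For the variance, use $\mathrm{Var}(g(N_i))\le\Expect[(g(N_i)-1)^2]=e^{-np_i}+\Expect_{N\sim\Poi(np_i)}[u_N^2]$ (with the convention $u_0=0$), since $g(0)-1=-1$ and $g(j)-1=u_j$ for $j\ge1$. Summing $e^{-np_i}\le e^{-n/k}$ over $\le k$ colors gives the second term; and since $np_i=nk_i/k$ with $k_i\in[M]$ for $i\in S$, bounding $\Expect_{\Poi(np_i)}[u_N^2]\le\max_{m\in[M]}\Expect_{N\sim\Poi(nm/k)}[u_N^2]$ and summing over $\le k$ colors gives the third term. Adding the four contributions yields \eqref{eq:rate-w}.

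The crux is the heavy colors: the coefficients $u_j$ encode a degree-$L$ polynomial that can be enormous outside $[0,1]$, so one cannot afford a color-by-color bound over the entire urn. The point is that a color appearing more than $M=\Theta(k\log k/n)$ times is --- except with probability $k^{-\Theta(1)}$ --- observed more than $L=\Theta(\log k)$ times, which pins its contribution $g(N_i)$ to exactly $1$; the projection onto $[\Cplug,k]$ is then needed only to bound the error by $k$ on the negligible exceptional event, and the Chernoff estimate of that probability is precisely what produces the exponent $\beta-\alpha\log\frac{e\beta}{\alpha}-3$.
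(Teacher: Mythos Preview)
Your proof is correct and follows essentially the same approach as the paper: the same good event (your $E^c$ is the paper's $E$), the same Chernoff--union bound for the exceptional probability, and the same bias--variance decomposition over the light colors via \eqref{eq:lp}. Your explicit introduction of $\tilde C'$ is in fact slightly cleaner than the paper's formulation $(\tilde C-C)\Indc_E=\calE$, which is only an equality on the good event.
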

\begin{proof}
    Since $ \Cplug\le C\le k $, $ \hat{C} $ is always an improvement of $ \tilde{C} $.
    Define the event $ E\triangleq \cap_{i=1}^k\{N_i\le L\Rightarrow kp_i\le M\} $, which means that whenever $N_i\le L$ we have $p_i\le M/k$. 
    Since $ \beta>\alpha $, applying the Chernoff bound and the union bound yields $ \Prob[E^c]\le k^{1-\beta +\alpha \log\frac{e\beta }{\alpha }} $, and thus
    \begin{equation}
        \Expect{(\hat C-C)^2}\le \Expect((\hat C-C)\Indc_E)^2+k^2\Prob[E^c]
        \le \Expect((\tilde C-C)\Indc_E)^2+k^{3-\beta +\alpha \log\frac{e\beta }{\alpha }}.
        \label{eq:hatC-risk1}
    \end{equation}
    The decomposable form of $ \tilde{C} $ in \prettyref{eq:tildeC-sum} leads to
    \begin{equation*}
        (\tilde C-C)\Indc_E
        =\sum_{i:k_i\in [M]}(g(N_i)-1)\indc{N_i\le L}\triangleq\calE.
    \end{equation*}
    In view of the bias analysis in \prettyref{eq:lp}, we have
    \begin{equation}
        |\Expect[{\cal E}]|
        \le \sum_{i:k_i\in [M]}e^{-nk_i/k}\norm{Bw-\ones}_{\infty}
        \le ke^{-n/k}\norm{Bw-\ones}_{\infty}.
        \label{eq:calE-bias}
    \end{equation}
    Recall that $g(0)=0$ and $g(j)=u_j+1$ for $j\in[L]$. Since $ N_i$ is independently distributed as $ \Poi(nk_i/k)$, we have
    \begin{align}
      \var[{\cal E}]
      & = \sum_{i:k_i\in [M]}\var\qth{(g(N_i)-1)\indc{N_i\le L}}
        \le \sum_{i:k_i\in [M]}\Expect\qth{(g(N_i)-1)^2\indc{N_i\le L}}\nonumber\\
      & = \sum_{i:k_i\in [M]}\pth{e^{-nk_i/k}+\Expect [u_{N_i}^2]}
        \le ke^{-n/k}+k\max_{m\in[M]}\Expect_{N\sim\Poi(nm/k)}[u_N^2]. \label{eq:calE-var}
    \end{align}

    Combining the upper bound on the bias in \prettyref{eq:calE-bias} and the variance in \prettyref{eq:calE-var} yields an upper bound on $ \Expect[\calE^2] $.
    Then the MSE in \prettyref{eq:rate-w} follows from \prettyref{eq:hatC-risk1}.
\end{proof}

\prettyref{prop:rate-w} suggests that the coefficients of the linear estimator can be chosen by solving the following linear programming (LP):
\begin{equation}
\min_{w \in \reals^L} \norm{Bw-\ones}_{\infty}
\label{eq:LP}
\end{equation}
and showing that the solution 
does not have large entries.
Instead of the $\ell_\infty$-approximation problem \prettyref{eq:LP}, whose optimal value is difficult to analyze, we solve the $\ell_2$-approximation problem as a relaxation:
\begin{equation}
\min_{w \in \reals^L} \Norm{Bw-\ones}_2,
\label{eq:L2}
\end{equation}
which is an upper bound of \prettyref{eq:LP}, and is in fact within an $O(\log k)$ factor since $M=O(k\log k/n)$ and $n=\Omega(k/\log k)$.
In the remainder of this section, we consider two separate cases:
\begin{itemize}
        \item $M>L$ ($n \lesssim k$): In this case, the linear system 
in \prettyref{eq:L2} is overdetermined and the minimum is non-zero. Surprisingly, as shown in \prettyref{sec:lse},
the exact optimal value can be found in closed form using discrete orthogonal polynomials. 
The coefficients of the solution can be bounded using the minimum singular value of the matrix $ B $, which is analyzed in \prettyref{sec:sigma-min} .

        \item $M\leq L$ ($n \gtrsim k$): In this case, the linear system is underdetermined and the minimum in \prettyref{eq:L2} is zero. To bound the variance, it turns out that the coefficients bound obtained from the minimum singular value is not precise enough in this regime. Instead, we express the coefficients in terms of Lagrange interpolating polynomials and use Stirling numbers to obtain sharp variance bounds. This analysis in carried out in \prettyref{sec:lag}.

\end{itemize}
%

We finish this subsection with two remarks:

\begin{remark}[Discrete versus continuous approximation]
The optimal estimator for the \SupportSize problem in \cite{WY15} has the same linear form as \prettyref{eq:tildeC-correction}; however, since the probabilities can take any values in an interval, the coefficients are found to be the solution of the continuous polynomial approximation problem
\begin{equation}
\inf_{p} \max_{x \in [\frac{1}{M},1]}  |p(x) - 1| = \exp\Big(-\Theta\Big(\frac{L}{\sqrt{M}}\Big)\Big).
\label{eq:cheby}
\end{equation}
        where the infimum is taken over all degree-$L$ polynomials such that $p(0)=0$, achieved by the (appropriately shifted and scaled) Chebyshev polynomial \cite{timan63}.
        In contrast, in \prettyref{sec:lse} we show that the discrete version of \prettyref{eq:cheby}, which is equivalent to the LP \prettyref{eq:LP}, satisfies
        \begin{equation}
\inf_{p} \max_{x \in \{\frac{1}{M},\frac{2}{M},\ldots,1\}}  |p(x) - 1| = \poly(M) \exp\Big(-\Theta\Big(\frac{L^2}{M}\Big)\Big),
\label{eq:cheby-discrete}
\end{equation}
provided $L < M$.
The difference between \prettyref{eq:cheby} and \prettyref{eq:cheby-discrete} explains why the sample complexity \prettyref{eq:sample-supp} for the \SupportSize problem has an extra log factor compared to that of the \DistinctElements problem in \prettyref{tab:main}.
When the sample size $n$ is large enough, interpolation is used in lieu of approximation. 
See \prettyref{fig:approx} for an illustration.

\begin{figure}[ht]
    \centering
    \subfigure[Continuous approximation]
    {\label{fig:cont-approx} 
        \includegraphics[width=.31\linewidth]{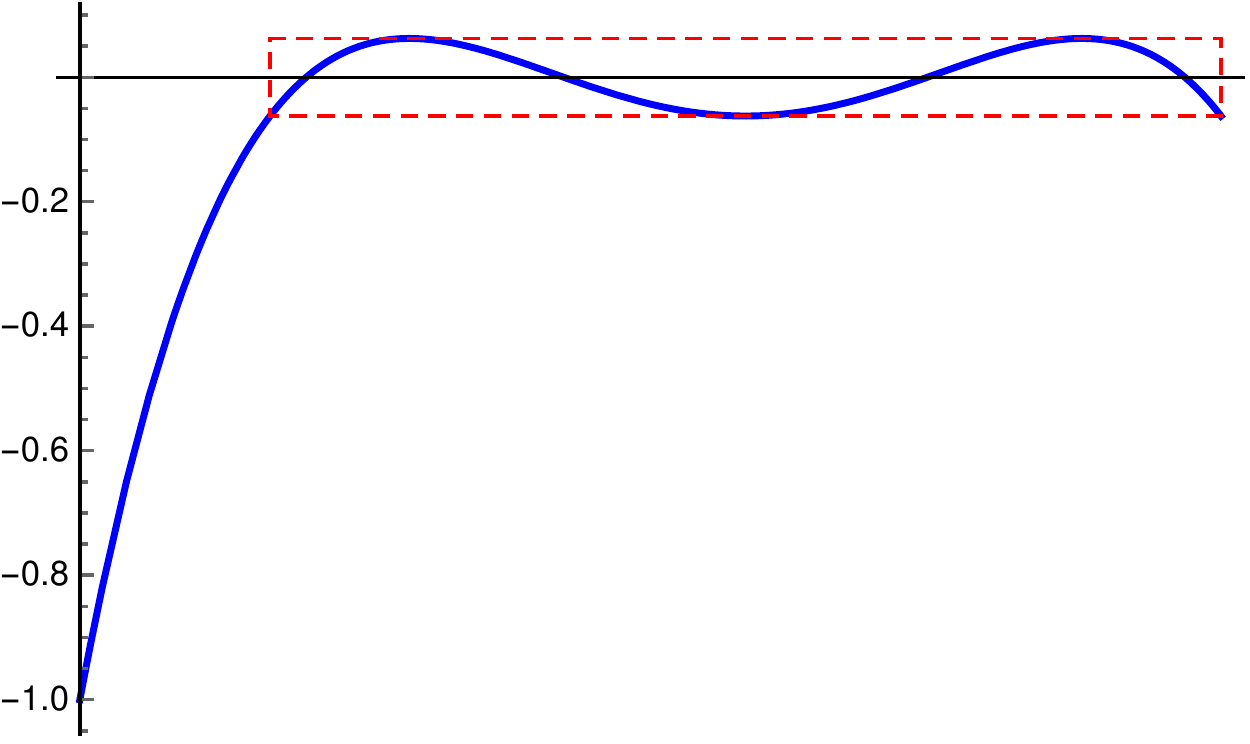}}
    \subfigure[Discrete approximation]
    {\label{fig:discrete-approx} 
        \includegraphics[width=.31\linewidth]{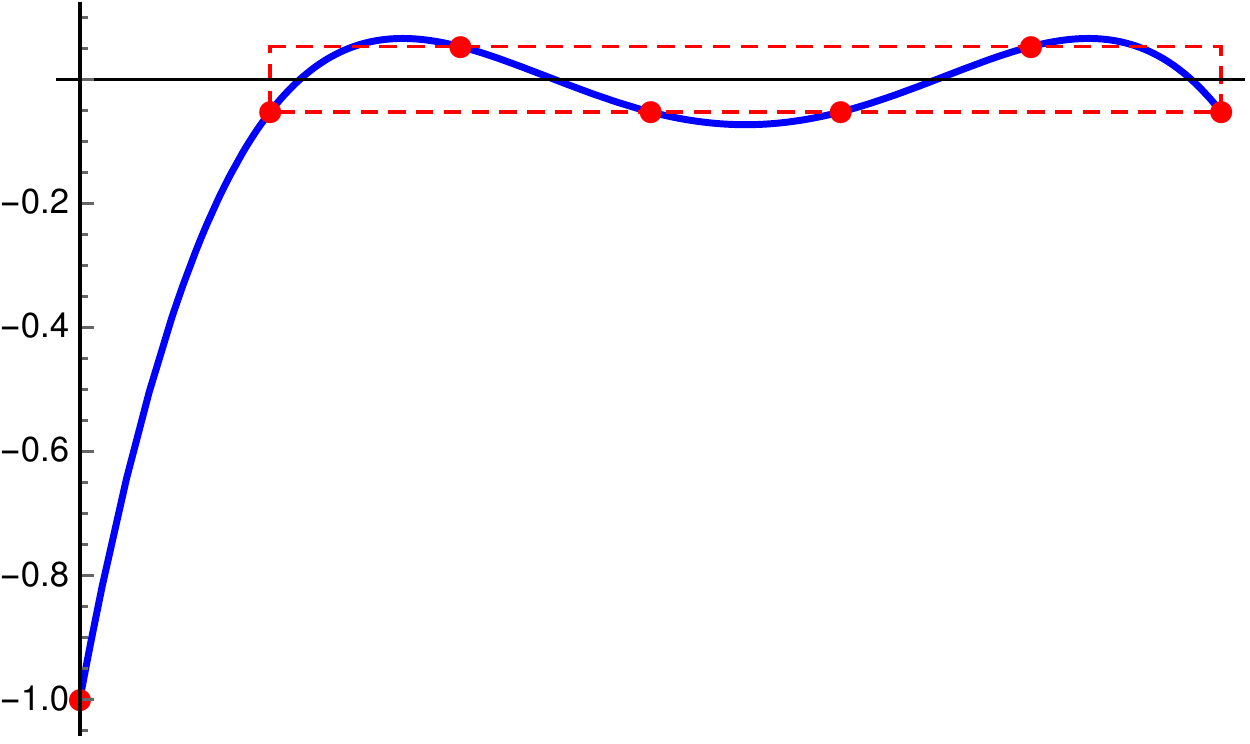}}
    \subfigure[Interpolation]
    {\label{fig:interpolation} 
        \includegraphics[width=.31\linewidth]{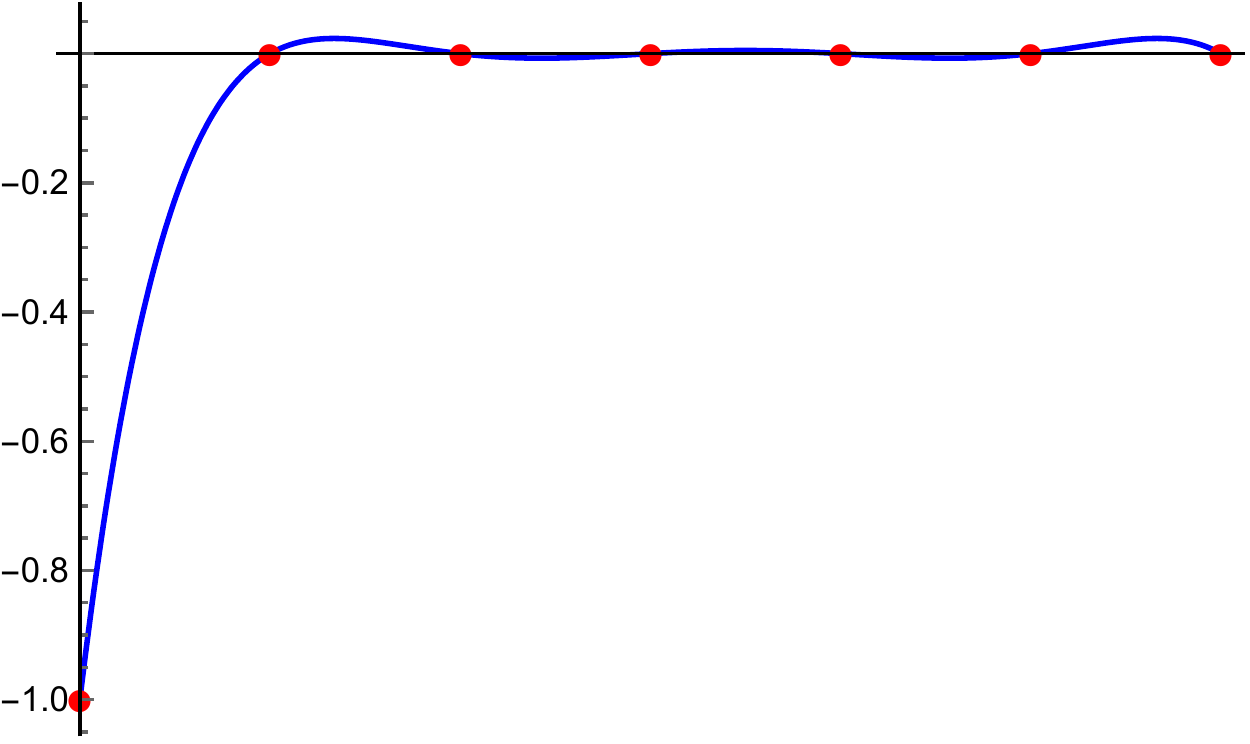}}
    \caption{Continuous and discrete polynomial approximations for $M= 6 $ and degree $L=4$, where \subref{fig:cont-approx} and \subref{fig:discrete-approx} plot the optimal solution to \prettyref{eq:cheby} and \prettyref{eq:cheby-discrete} respectively. The interpolating polynomial in \subref{fig:interpolation} requires a higher degree $L=6$. }
    \label{fig:approx}
\end{figure}

\end{remark}

\begin{remark}[Time complexity] 
    The time complexity of the estimator \prettyref{eq:tildeC-correction} consists of:
    (a) Computing histograms $ N_i $ and fingerprints $ \Phi_j $ of $ n $ samples: $ O(n) $;
    (b) Computing the coefficients $ w $ by solving the least square problem in \prettyref{eq:lp}: $ O(L^2(M+L)) $;
    (c) Evaluating the linear combination \prettyref{eq:tildeC-correction}: $ O(n\wedge k) $.
    As shown in \prettyref{tab:main}, for an accurate estimation the sample complexity is $ n=\Omega(\frac{k}{\log k}) $, which implies $ L=O(\log k) $ and $ M=O(\log^2k) $. Therefore, the overall time complexity is $ O(n+\log^4k)=O(n)$.
\end{remark}

\subsection{Exact solution to the $\ell_2$-approximation}
\label{sec:lse}

Next we give an explicit solution to the $\ell_2$-approximation problem \prettyref{eq:L2}.
In general, the optimal solution is given by $ w^*=(B^\top B)^{-1}B^\top\ones $ and the minimum value is the Euclidean distance between the all-one vector $\ones$ and the column span of $B$, which, in the case of $ M > L $, is non-zero (since $B$ has linearly independent columns). 
Taking advantage of the Vandermonde structure of the matrix $B$ in \prettyref{eq:Bw}, we note that \prettyref{eq:L2} can be interpreted as finding the orthogonal projection of the constant function onto the linear space of polynomials of degree between $1$ and $L$ defined on the discrete set $[M]/M$. Using the 
orthogonal polynomials with respect to the counting measure, known as \emph{discrete Chebyshev (or Gram) polynomials} (see \cite[Section 2.8]{orthogonal.poly} or \cite[Section 2.4.2]{discrete-orthogonal}), we show that, surprisingly, the optimal value of the $\ell_2$-approximation can be found in closed form:


\begin{lemma}
    \label{lmm:lse-min}
    For all $L\geq 1$ and $ M\ge L+1 $,
    \begin{equation}
        \min_{w\in\reals^L}\norm{Bw-\ones}_2
        =\qth{{\frac{\binom{M+L+1}{L+1}}{\binom{M}{L+1}}-1}}^{-1/2} = \qth{\exp\pth{\Theta\pth{\frac{L^2}{M}}}-1}^{-1/2}.
        \label{eq:lse-min}
    \end{equation}
\end{lemma}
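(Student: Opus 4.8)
The plan is to recognize the $\ell_2$-approximation problem \prettyref{eq:L2} as the evaluation of a \emph{Christoffel function} and to compute that value in closed form via the reproducing kernel of the discrete Chebyshev polynomials. First I would pass to polynomial language: the columns of $B$ in \prettyref{eq:Bw} are the vectors $\bigl((m/M)^j\bigr)_{m\in[M]}$, $j\in[L]$, so with $p(x)=\sum_{j=1}^L w_j x^j$ we have $Bw-\ones=\bigl(p(m/M)-1\bigr)_{m\in[M]}$; writing $P=1-p$ and replacing $P(x)$ by $P(x/M)$ (which changes neither the degree nor $P(0)$) yields
\begin{equation*}
  \min_{w\in\reals^L}\Norm{Bw-\ones}_2^2 = \min\Bigl\{\,\sum_{m=1}^{M}P(m)^2 \;:\; \deg P\le L,\ P(0)=1\,\Bigr\},
\end{equation*}
which is exactly the Christoffel function at the point $0$ of the counting measure $\mu\triangleq\sum_{m=1}^M\delta_m$ on $[M]$. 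Let $\pi_0,\pi_1,\dots$ be the monic orthogonal polynomials of $\mu$ (the shifted discrete Chebyshev/Gram polynomials, $\deg\pi_i=i$), put $\|\pi_i\|^2\triangleq\sum_{m=1}^M\pi_i(m)^2$ and $K_L(x,y)\triangleq\sum_{i=0}^L \pi_i(x)\pi_i(y)/\|\pi_i\|^2$. Since $M\ge L+1$, the bilinear form $\langle f,g\rangle=\sum_{m=1}^M f(m)g(m)$ is an inner product on polynomials of degree $\le L$, $K_L$ reproduces point evaluation on this space, and a one-line Lagrange-multiplier computation identifies the minimizer above as $K_L(\cdot,0)/K_L(0,0)$ with optimal value $1/K_L(0,0)$. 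It therefore remains to evaluate $K_L(0,0)=\sum_{i=0}^L\pi_i(0)^2/\|\pi_i\|^2$.

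For this I would substitute the classical data for the discrete Chebyshev polynomials [cf.\ \cite{orthogonal.poly,discrete-orthogonal}]: from the three-term recurrence, whose off-diagonal coefficient is $\beta_i=\frac{i^2(M^2-i^2)}{4(4i^2-1)}$, one gets $\|\pi_i\|^2=\frac{(i!)^4(M+i)!}{(2i)!\,(2i+1)!\,(M-i-1)!}$; and from the hypergeometric (Hahn) representation together with the Chu–Vandermonde identity — equivalently, by solving the recurrence at the off-lattice point $x=0$ — one gets $\pi_i(0)=(-1)^i\frac{(i!)^2(M+i)!}{(2i)!\,M!}$. The scale-invariant ratio is then $\pi_i(0)^2/\|\pi_i\|^2 = \frac{(2i+1)(M+i)!(M-i-1)!}{(M!)^2}$. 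The algebraic crux is the telescoping identity
\begin{equation*}
  (2i+1)(M+i)!(M-i-1)! = (M+i+1)!(M-i-1)! - (M+i)!(M-i)!,
\end{equation*}
which collapses the sum to $K_L(0,0)=\frac{(M+L+1)!(M-L-1)!-(M!)^2}{(M!)^2}=\frac{\binom{M+L+1}{L+1}}{\binom{M}{L+1}}-1$, giving the exact value $1/K_L(0,0)$ claimed in \prettyref{eq:lse-min}.

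It remains to check $\frac{\binom{M+L+1}{L+1}}{\binom{M}{L+1}}=\exp\bigl(\Theta(L^2/M)\bigr)$. Writing the ratio as $\prod_{i=1}^{L+1}\frac{M+i}{M+1-i}$ and taking logarithms, the estimate $\log\frac{M+i}{M+1-i}=\int_{M+1-i}^{M+i}\frac{dt}{t}\ge\frac{2i-1}{M+i}\ge\frac{2i-1}{2M}$ sums over $i\le L+1\le M$ to the lower bound $\frac{(L+1)^2}{2M}$. For the matching upper bound I would split on the size of $L$: if $L\le M/2$ the companion estimate $\log\frac{M+i}{M+1-i}\le\frac{2i-1}{M+1-i}\le\frac{2(2i-1)}{M}$ sums to $O(L^2/M)$; if $L>M/2$ then $\frac{\binom{M+L+1}{L+1}}{\binom{M}{L+1}}=\frac{\binom{2M}{M}}{\binom{2M}{M+L+1}}\le\binom{2M}{M}\le 4^M$, and $M=O(L^2/M)$ in this range.

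I expect the main obstacle to be the two closed-form identities used in the second paragraph — in particular pinning down $\pi_i(0)$, the value of a discrete Chebyshev polynomial at a point just outside its support, and carefully reconciling the normalization conventions (monic versus the standard hypergeometric normalization, and support $[M]$ versus $[M]/M$). Everything downstream is then the one-line telescoping computation together with the elementary binomial-ratio asymptotics above.
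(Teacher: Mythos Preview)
Your proposal is correct and follows essentially the same route as the paper: both recast \prettyref{eq:L2} as minimizing $\sum_m P(m)^2$ over degree-$L$ polynomials with $P(0)=1$, expand in the discrete Chebyshev basis, evaluate those polynomials at the off-lattice point $0$ (equivalently $-1$ in the paper's shifted coordinates), and collapse $\sum_{i=0}^L \phi_i(0)^2$ via the same telescoping identity $(2i+1)(M+i)!(M-i-1)!=(M+i+1)!(M-i-1)!-(M+i)!(M-i)!$ --- you simply name this quantity the Christoffel function $1/K_L(0,0)$, which is the standard terminology for exactly the computation the paper carries out. Your asymptotic argument via the product $\prod_{i=1}^{L+1}\frac{M+i}{M+1-i}$ with termwise log bounds (and the $\binom{2M}{M}/\binom{2M}{M+L+1}$ rewrite for $L>M/2$) is a bit more elementary than the paper's Stirling-based derivation, but otherwise equivalent.
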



\begin{proof}
    Define the following inner product between functions $f$ and $g$:
    \begin{equation}
        \inner{f,g}\triangleq \sum_{i=1}^{M}f\pth{\frac{i}{M}}g\pth{\frac{i}{M}}
        \label{eq:inner}
    \end{equation}
    and the induced norm $ \norm{f}\triangleq \sqrt{\inner{f,f}} $.
    The least square problem \prettyref{eq:lse-min} can be equivalently formulated as
    \begin{equation}
        \min_{w\in\reals^L}\Norm{-1+w_1x+w_2x^2+\dots+w_Lx^L}.
        \label{eq:lse-poly}
    \end{equation}
    This can be analyzed using the orthogonal polynomials under the inner product \prettyref{eq:inner}, which we describe next.

                Recall the discrete Chebyshev polynomial \cite[Sec.~2.8]{orthogonal.poly}: for $x=0,1,\dots,M-1$,
                \begin{equation}
        t_m(x)
        \triangleq \frac{1}{m!}\Delta^mp_m(x)
        =\frac{1}{m!} \sum_{j=0}^{m}(-1)^j\binom{m}{j}p_m(x+m-j), \quad 0\le m\le M-1,
        \label{eq:tn-full}
    \end{equation} 
    where 
    \begin{equation}
        p_m(x)\triangleq x(x-1)\cdots(x-m+1)(x-M)(x-M-1)\cdots(x-M-m+1),
        \label{eq:def-pm}
    \end{equation}
    and $ \Delta^m $ denotes the $ m $-th order forward difference.
                The polynomials $\{t_0,\ldots,t_{M-1}\}$ are orthogonal with respect to the counting measure over the discrete set $ \sth{0,1,\dots,M-1} $; in particular, we have
    (cf.~\cite[Sec.~2.8.2, 2.8.3]{orthogonal.poly}):
    \begin{align*}
        \sum_{x=0}^{M-1}t_m(x)t_\ell(x)&=0,\quad m\ne \ell,\\
        \sum_{x=0}^{M-1}t_m^2(x)&=\frac{M(M^2-1^2)(M^2-2^2)\cdots(M^2-m^2)}{2m+1}\triangleq c(M,m).
    \end{align*}

    By appropriately shifting and scaling the set of polynomials $ t_m $,
    we define an orthonormal basis for the set of polynomials of degree at most $ L \leq M-1$ under the inner product \prettyref{eq:inner} by 
    \begin{equation}
        \phi_m(x)=\frac{t_m(M x-1)}{\sqrt{c(M,m)}},\quad m=0,\dots,L.
        \label{eq:def-phi}
    \end{equation}
    Since $\{\phi_m\}_{m=0}^L$ constitute a basis for polynomials of degree at most $L$, the least square problem \prettyref{eq:lse-poly} can be equivalently formulated as
    \begin{equation*}
        \min_{a:\sum_{i=1}^L a_i\phi_i(0)=-1}\norm{\sum_{i=0}^{L}a_i\phi_i}
        =\min_{a:\iprod{a}{\phi(0)}=-1}\norm{a}_2,
    \end{equation*}
    where $ \phi(0)\triangleq (\phi_0(0),\dots,\phi_L(0)) $, $a=(a_0,\ldots,a_L)$, and $\iprod{\cdot}{\cdot}$ denotes vector inner product.
    Thus, the optimal value is clearly $ \frac{1}{\norm{\phi(0)}_2} $, achieved by $ a^*=-\frac{\phi(0)}{\norm{\phi(0)}_2^2} $.

     From \prettyref{eq:def-pm} we have $ p_m(0)=p_m(1)=\dots=p_m(m-1)=0 $.
     By the formula of $ t_m $ in \prettyref{eq:tn-full}, we obtain
     \begin{equation*}
         t_m(-1)=\frac{1}{m!}(-1)^mp_m(-1)=(-1)^m\prod_{j=1}^m(M+j).
     \end{equation*}
    In view of the definition of $ \phi_m $ in \prettyref{eq:def-phi}, we have
    \begin{equation*}
        \phi_m(0)
        =\frac{t_m(-1)}{\sqrt{c(M,m)}}
        =\frac{(-1)^m\prod_{j=1}^m(M+j)}{\sqrt{\frac{M\prod_{j=1}^m(M^2-j^2)}{2m+1}}}
        =(-1)^m\sqrt{\frac{2m+1}{M}\prod_{j=1}^m\frac{M+j}{M-j}}. 
    \end{equation*}
    Therefore
    \begin{equation*}
        \norm{\phi(0)}_2^2
        =\sum_{m=0}^L\frac{2m+1}{M}\prod_{j=1}^m\frac{M+j}{M-j}
        =\frac{\binom{M+L+1}{L+1}}{\binom{M}{L+1}} - 1,
    \end{equation*}
    where the last equality follows from induction since
    \begin{equation*}
        \frac{\binom{M+L+1}{L+1}}{\binom{M}{L+1}} -\frac{\binom{M+L}{L}}{\binom{M}{L}} 
        =\frac{2L+1}{M}\prod_{j=1}^L\frac{M+j}{M-j}.
    \end{equation*}
                This proves the first equality in \prettyref{eq:lse-min}.


    The second equality in \prettyref{eq:lse-min} is a direct consequence of Stirling's approximation.
    If $ M=L+1 $, then
    \begin{equation}
        \frac{\binom{M+L+1}{L+1}}{\binom{M}{L+1}}
        = \binom{2(L+1)}{L+1}
        = \exp(\Theta(L)).
        \label{eq:lse-ub-ref1}
    \end{equation}
    If $ M\ge L+2 $, denoting $ x=\frac{L+1}{M} $ and applying $ n!= \sqrt{2\pi n}(\frac{n}{e})^n(1+\Theta(\frac{1}{n})) $ when $ n\ge 1 $, we have
    \begin{align}
      \frac{\binom{M+L+1}{L+1}}{\binom{M}{L+1}}
      & = \frac{(M+L+1)!(M-L-1)!}{(M!)^2}
        = \frac{(M(1+x))!(M(1-x))!}{(M!)^2}\nonumber\\
      & = \frac{\sqrt{2\pi M(1+x)}(\frac{M(1+x)}{e})^{M(1+x)} \sqrt{2\pi M(1-x)}(\frac{M(1-x)}{e})^{M(1-x)}(1+\Theta(\frac{1}{M(1+x)}+\frac{1}{M(1-x)}))}{2\pi M(\frac{M}{e})^{2M}(1+\Theta(\frac{1}{M}))}\nonumber\\
      & = \sqrt{1-x^2}\exp\pth{M( (1+x)\log(1+x)+(1-x)\log(1-x) )}\frac{1+\Theta(\frac{1}{M(1-x^2)})}{1+\Theta(\frac{1}{M})} \nonumber\\
      & = \exp\pth{\Theta(Mx^2)+\frac{1}{2}\log(1-x^2)+\log\frac{1+\Theta(\frac{1}{M(1-x^2)})}{1+\Theta(\frac{1}{M})}},\label{eq:lse-ub-ref2}
    \end{align}
    where the last step follows from $ (1+x)\log(1+x)+(1-x)\log(1-x)= \Theta(x^2) $ when $ 0\le x\le 1 $.
    In the exponent of \prettyref{eq:lse-ub-ref2}, the term $ \Theta(Mx^2) $ dominates when $ M\ge L+2 $.
    Applying \prettyref{eq:lse-ub-ref1} and \prettyref{eq:lse-ub-ref2} to the exact solution \prettyref{eq:lse-min} yields the desired approximation.
\end{proof}

\subsection{Minimum singular values of real rectangle Vandermonde matrices}
\label{sec:sigma-min}
In \prettyref{prop:rate-w} the variance of our estimator is bounded by the magnitude of coefficients $ u $, which is related to the polynomial coefficients $ w $ by \prettyref{eq:Bw}.
A classical result from approximation theory is that if a polynomial is bounded over a compact interval, its coefficients are at most exponential in the degree \cite[Theorem 2.9.11]{timan63}:
for any degree-$L$ polynomial $p(x) = \sum_{i=0}^{L}w_ix^i$,
\begin{equation}
    \max_{0\le i\le L}|w_i|
    \le \max_{x\in[0,1]}\abs{p(x)}\exp(O(L)),
    \label{eq:timan-coeffs}
\end{equation}
which is tight when $p$ is the Chebyshev polynomial.
This fact has been applied in statistical contexts to control the variance of estimators obtained from best polynomial approximation \cite{CL11,WY14,WY15,JVHW15}. In contrast, for the \DistinctElements problem, the polynomial is only known to be bounded over the discretized interval.
Nevertheless, we show that the bound  \prettyref{eq:timan-coeffs} continues to hold as long as the discretization level exceeds the degree:
\begin{equation}
    \max_{0\le i\le L}|w_i|\le \max_{x\in\{\frac{1}{M},\frac{2}{M},\dots,1\}}\abs{p(x)}\exp(O(L)),
                \label{eq:timan-coeffs-discrete}
\end{equation}
provided that $ M\ge L+1 $ (see \prettyref{rmk:timan-coeffs-discrete} after \prettyref{lmm:sigma-min}).
Clearly, \prettyref{eq:timan-coeffs-discrete} implies \prettyref{eq:timan-coeffs} by sending $M\diverge$. 
If $M \leq L$, a coefficient bound like \prettyref{eq:timan-coeffs-discrete} is impossible, because one can add to $p$ an arbitrary degree-$L$ interpolating polynomial that evaluates to zero at all $M$ points.

To bound the coefficients, note that the optimal solution of $ \ell_2 $-approximation is $ w^*=(B^\top B)^{-1}B^\top\ones $, and consequently 
\begin{equation}
\Norm{w^*}_2\le \frac{\Norm{\ones}_2}{\sigma_{\min}(B)} ,
\label{eq:normw}
\end{equation}
  where $ \sigma_{\min}(B) $ denotes the smallest singular value of $ B $.
Let 
\[
\bar B \triangleq [\mathbf{1},B] =
    \begin{pmatrix}
        1&1/M & (1/M)^2 & \cdots & (1/M)^L  \\
        1&2/M & (2/M)^2 & \cdots & (2/M)^L  \\
        1&\vdots  & \vdots  & \ddots & \vdots  \\
        1&1 & 1 & \cdots & 1
    \end{pmatrix}
\]
which is an $M\times (L+1)$ Vandermonde matrix and satisfies $ \sigma_{\min}(\bar B) \leq \sigma_{\min}(B) $ since $\bar B$ has one extra column.
The Gram matrix of $ \bar B $ is an instance of \emph{moment matrices}.
A moment matrix associated with a probability measure $ \mu $ is a Hankel matrix $M$ given by $ M_{i,j}=m_{i+j-2} $, where 
    $m_\ell = \int x^\ell \diff \mu$
denotes the $ \ell $th moment of $ \mu $.
Then $ \frac{1}{M}\bar{B}^\top\bar{B} $ is the moment matrix associated with the uniform distribution over the discrete set $ \{\frac{1}{M},\frac{2}{M},\dots,1\} $, which converges to the uniform distribution over the interval $(0,1)$.
The moment matrix of the uniform distribution is the famous \emph{Hilbert matrix} $H$, with
\[
H_{ij} = \frac{1}{i+j-1}
\]
 which is a well-studied example of ill-conditioned matrices 
in the numerical analysis literature. In particular, it is known that the condition number of the $L\times L$ Hilbert matrix is $ O(\frac{(1+\sqrt{2})^{4L}}{\sqrt{L}}) $ \cite{Todd1954} and the operator norm is $ \Theta(1) $, and thus the minimum singular value is exponentially small in the degree.
Therefore we expect the discrete moment matrix $ \frac{1}{M}\bar{B}^\top\bar{B} $ to behave similarly to the Hilbert matrix when $M$ is large enough.
Interestingly, we show that this is indeed the case as soon as $ M $ exceeds $ L $ (otherwise the minimum singular value is zero).

\begin{lemma}
    \label{lmm:sigma-min}
    For all $ M\ge L+1 $,
    \begin{equation}
        \sigma_{\min}\pth{\frac{\bar B}{\sqrt{M}}}
        \ge \frac{1}{L^22^{7L}(2L+1)}\pth{\frac{M+L}{eM}}^{L+0.5}.
        \label{eq:sigma-min}
    \end{equation}
\end{lemma}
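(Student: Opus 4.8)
The plan is to lower bound $\sigma_{\min}(\bar B/\sqrt M)$ by controlling the Gram matrix $G \triangleq \frac1M \bar B^\top \bar B$, whose $(i,j)$ entry for $0 \le i,j \le L$ is the empirical moment $m_{i+j} = \frac1M\sum_{r=1}^M (r/M)^{i+j}$. Since $\sigma_{\min}(\bar B/\sqrt M)^2 = \lambda_{\min}(G) = \min_{\|a\|_2=1}\, a^\top G a = \min_{\|a\|_2=1}\, \frac1M\sum_{r=1}^M p_a(r/M)^2$ where $p_a(x)=\sum_{i=0}^L a_i x^i$, it suffices to show that any degree-$L$ polynomial with unit coefficient vector cannot be uniformly small on the grid $\{1/M,\dots,1\}$. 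The natural route is to pass through the continuous Hilbert-matrix estimate: the continuous Gram matrix $H'_{ij} = \int_0^1 x^{i+j}\,dx = \frac1{i+j+1}$ is a shifted Hilbert matrix whose minimum eigenvalue is known (from \cite{Todd1954}-type bounds, or explicitly via the Cholesky factorization / Legendre polynomials) to be at least something like $((\sqrt2-1)^{4L}/\text{poly}(L))$, matching the shape of the claimed bound. The discrete–continuous discrepancy $\|G - H'\|$ is governed by how well the Riemann sum $\frac1M\sum_{r=1}^M(r/M)^\ell$ approximates $\int_0^1 x^\ell\,dx$, and this error is $O(\ell/M)$ entrywise, hence $O(L^2/M)$ in operator norm — but this is too crude when $M$ is close to $L$, so a more careful argument is needed near $M=L+1$.

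First I would make the reduction explicit: write $\lambda_{\min}(G)$ as the minimum over unit-norm coefficient vectors $a$ of the discrete $\ell_2$-norm squared of $p_a$ on the grid, and observe we want a lower bound of order $((M+L)/(eM))^{2L+1}$ up to the stated polynomial factors. Second, I would handle the polynomial $p_a$ directly rather than through the matrix perturbation: since $\|a\|_2 = 1$, at least one coefficient has $|a_i| \ge 1/\sqrt{L+1}$; the key lemma to invoke (or prove) is a \emph{discrete} analogue of the Markov-type inequality that a degree-$L$ polynomial bounded by $\epsilon$ on the grid $[M]/M$ has all coefficients bounded by $\epsilon\cdot e^{O(L)}$ — but this is exactly \prettyref{eq:timan-coeffs-discrete}, which is what the lemma is supposed to \emph{establish}, so I must instead argue from scratch. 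The cleanest self-contained approach: use the explicit LDL (or QR) factorization of $\bar B$ furnished by the \emph{discrete Chebyshev polynomials} $t_0,\dots,t_L$ already introduced in \prettyref{sec:lse}. Writing $\bar B = Q R$ where the columns of $Q$ are the evaluation vectors of the \emph{normalized} discrete Chebyshev polynomials $\phi_0,\dots,\phi_L$ (an orthogonal set under the grid inner product, with $\|\phi_m\|^2 = M$ after the $1/\sqrt M$ scaling... wait, they were normalized, so $\frac1M\sum$ — recheck normalization), and $R$ is upper-triangular expressing the monomials $1,x,\dots,x^L$ in terms of $\phi_0,\dots,\phi_L$, one gets $\sigma_{\min}(\bar B/\sqrt M) \ge \sigma_{\min}(Q/\sqrt M)\,\sigma_{\min}(R) = \sigma_{\min}(R)$ since $Q/\sqrt M$ has orthonormal columns. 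So the whole problem reduces to lower bounding $\sigma_{\min}(R)$, i.e., bounding the norms of the \emph{change-of-basis} coefficients expressing monomials via the $\phi_m$.

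The third step is therefore the heart: bound $\|R^{-1}\|_{\mathrm{op}}$ from above (equivalently $\sigma_{\min}(R)$ from below), where $R^{-1}$ has columns given by expanding $\phi_m = \sum_{j\le m} (R^{-1})_{jm} x^j$. From the explicit formula \prettyref{eq:tn-full}–\prettyref{eq:def-pm} for $t_m$, the coefficients of $t_m$ (after the affine substitution $x \mapsto Mx - 1$) can be read off and bounded: $p_m$ has integer coefficients of size at most $\binom{2m}{m}M^m\cdot(\text{stuff})$, the $m$-th forward difference $\Delta^m$ contributes a factor $\binom{m}{j}$ sum, and dividing by $\sqrt{c(M,m)} \asymp M^{m+1/2}$ (up to $2^{O(m)}$) yields a coefficient bound of the shape $2^{O(L)}(M/(M-L))^{O(L)}\cdot M^{-1/2}$. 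Tracking the constants carefully through the Stirling-type estimate $\prod_{j=1}^m \frac{M+j}{M-j}$ and $c(M,m)$ — exactly the quantities already computed in the proof of \prettyref{lmm:lse-min} — should produce the factor $((M+L)/(eM))^{L+1/2}$ together with the polynomial-in-$L$ and $2^{7L}$ slack in \prettyref{eq:sigma-min}. \textbf{The main obstacle} will be this last bookkeeping near $M = L+1$, where the ratios $\frac{M+j}{M-j}$ blow up (the denominator $M - L$ can be as small as $1$) and the naive Riemann-sum / Hilbert-matrix comparison fails completely; one must instead use the exact combinatorial identities for $c(M,m)$ and the Chebyshev coefficients, and verify that the blow-up is precisely captured by the $((M+L)/(eM))^{L+1/2}$ factor (note $(M+L)/M$ is bounded by $2$, so that factor is really $\exp(-\Theta(L))$ times a benign correction, and the burden is to show nothing worse than $L^2 2^{7L}(2L+1)$ is lost). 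A secondary technical point is handling the affine change of variables $x \mapsto Mx-1$ correctly, since it scales the $j$-th coefficient by $M^j$ and shifts — but this is exactly the source of the $M$-dependence and must be combined with the $\sqrt{c(M,m)}$ normalization rather than bounded separately.
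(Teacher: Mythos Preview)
Your reduction is the same as the paper's: both factor through the orthonormal discrete Chebyshev basis $\phi_0,\ldots,\phi_L$ and reduce $\sigma_{\min}(\bar B)$ to the norm of the change-of-basis matrix from monomials to $\phi_m$ (your $R^{-1}$, the paper's $\mathbf{L}$ with $\phi=\mathbf{L}\mathbf{x}$, so that $\bar B^\top\bar B=(\mathbf{L}^{-1})(\mathbf{L}^{-1})^\top$ and $\sigma_{\min}(\bar B)\ge 1/\|\mathbf{L}\|_F$). On your normalization question: the paper's inner product \prettyref{eq:inner} has no $1/M$, so the evaluation matrix $Q$ of the $\phi_m$ satisfies $Q^\top Q=I$ directly, giving $\sigma_{\min}(\bar B)=\sigma_{\min}(R)$ and hence $\sigma_{\min}(\bar B/\sqrt M)=\sigma_{\min}(R)/\sqrt M$.

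Where the two diverge is the final step of bounding the coefficients of $\phi_m$. You propose to expand $t_m$ term by term from \prettyref{eq:tn-full}--\prettyref{eq:def-pm}, push the affine substitution $x\mapsto Mx-1$ through, and track everything against $\sqrt{c(M,m)}$. The paper instead bounds the \emph{sum} of squared coefficients of each $\phi_m$ in one shot via the Parseval identity on the unit circle,
\[
\sum_j |a_j|^2=\frac{1}{2\pi}\oint_{|z|=1}|p(z)|^2\,dz\le \sup_{|z|=1}|p(z)|^2,
\]
so that only $\sup_{|z|=1}|\phi_m(z)|=\sup_{|z|=1}|t_m(Mz-1)|/\sqrt{c(M,m)}$ needs to be controlled. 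This is handled by a separate lemma giving the crude complex bound $|t_m(z)|\le m^2 2^{6m}\sup_{0\le\xi\le m}(|z+\xi|\vee M)^m$, whose proof writes $\Delta^m p_m$ as an integral of $p_m^{(m)}$ and bounds the derivative coefficients. This detour through the complex plane is exactly what sidesteps the coefficient bookkeeping you flag as the main obstacle near $M=L+1$: one never has to multiply out the affine substitution against the forward-difference coefficients, and the factor $(\frac{M+L}{eM})^{L+0.5}$ drops out cleanly from $c(M,L)=\frac{M}{2L+1}\binom{M+L}{2L+1}(2L+1)!$ via $\binom{n}{k}\ge(n/k)^k$. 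Your direct-expansion route should also go through, but the paper's contour trick is what buys the tidy constant $L^2 2^{7L}(2L+1)$.
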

\begin{remark}
    \label{rmk:timan-coeffs-discrete}       
    The inequality \prettyref{eq:timan-coeffs-discrete}     follows from \prettyref{lmm:sigma-min} since the coefficient vector $w=(w_0,\ldots,w_L)$ satisfies
    $\|w\|_\infty \leq \|w\|_2 \leq \frac{1}{\sigma_{\min}(\bar B)} \|\bar B w\|_2 \leq \frac{\sqrt{M}}{\sigma_{\min}(\bar B)} \|\bar B w\|_\infty$.
\end{remark}
\begin{remark}
    The extreme singular values of square Vandermonde matrices have been extensively studied (c.f.~\cite{Gautschi1990,Beckermann2000} and the references therein).
    For rectangular Vandermonde matrices, the focus was mainly with nodes on the unit circle in the complex domain \cite{CGR1990,Ferreira1999,Moitra2015} with applications in signal processing. In contrast, \prettyref{lmm:sigma-min} is on rectangular Vandermonde matrices with real nodes.
    The result on integers nodes in \cite{EPS01} turns out to be too crude for the purpose of this paper.
\end{remark}

\begin{proof}
    Note that $ \bar B^\top\bar B $ is the Gramian of monomials $ \mathbf{x}=(1,x,x^2,\dots,x^L)^\top $ under the inner product defined in \prettyref{eq:inner}.
    When $ M\ge L+1 $, the orthonormal basis $ \phi=(\phi_0,\dots,\phi_L)^\top $ under the inner product \prettyref{eq:inner} are given in \prettyref{eq:def-phi}.
    Let $ \phi=\mathbf{Lx} $ where $ \mathbf{L} \in \reals^{(L+1)\times (L+1)} $ is a lower triangular matrix and $ \bf{L} $ consists of the coefficients of $ \phi $.
    Taking the Gramian of $ \phi $ yields that $ I={\bf L}(\bar B^\top\bar B){\bf L}^\top $, \ie, $ {\bf L}^{-1} $ can be obtained from the Cholesky decomposition: $ \bar B^\top\bar B=({\bf L}^{-1})({\bf L}^{-1})^\top $.    
    Then\footnote{The lower bound \prettyref{eq:lambdamin-L}, which was also obtained in \cite[(1.13)]{CL99} using Cauchy-Schwarz inequality, is tight up to polynomial terms in view of the fact that $ \Norm{\bf L}_{F}\le (L+1)\Norm{\bf L}_{op} $.}
    \begin{equation}
        \sigma_{\min}^2(\bar B)
        =\frac{1}{\norm{{\bf L}}^2_{op}}
        \ge \frac{1}{\norm{{\bf L}}^2_F},
        \label{eq:lambdamin-L}
    \end{equation}
    where $ \norm{\cdot}_{op} $ denotes the $ \ell_2 $ operator norm, which is the largest singular value of $ L $, and $ \norm{\cdot}_F $ denotes the Frobenius norm.
    By definition, $ \norm{{\bf L}}^2_F $ is the sum of all squared coefficients of $ \phi_0,\dots,\phi_L $.
    A useful method to bound the sum-of-squares of the coefficients of a polynomial is by its maximal modulus over the unit circle on the complex plane. Specifically,
    for any polynomial $ p(z)=\sum_{i=0}^{n}a_iz^i $, we have 
    \begin{equation}
        \sum_{i=0}^{n}|a_i|^2=\frac{1}{2\pi}\oint_{|z|=1}|p(z)|^2\diff z \le \sup_{|z|=1}| p(z)|^2 .
        \label{eq:sos}
    \end{equation}
    Therefore
    \begin{equation}
        \sigma_{\min}(\bar B)
        \ge \frac{1}{\norm{{\bf L}}_F}
        \ge \frac{1}{\sqrt{\sum_{m=0}^{L}\sup_{|z|=1}|\phi_m(z)|^2}}
        \ge \frac{1}{\sqrt{L+1}}\frac{1}{\sup_{0\leq m \leq L,|z|=1}|\phi_m(z)|}.
        \label{eq:coeffs-phiz}
    \end{equation}

    For a given $ M $, the orthonormal basis $ \phi_m(x) $ in \prettyref{eq:def-phi} is proportional to the discrete Chebyshev polynomials $ t_m(Mx-1) $.
    The classical asymptotic result for the discrete Chebyshev polynomials shows that \cite[(2.8.6)]{orthogonal.poly}
    \begin{equation*}
        \lim_{M\rightarrow \infty}M^{-m}t_m(Mx)=P_m(2x-1),
    \end{equation*}
    where $ P_m $ is the Legendre polynomial of degree $ m $.
    This gives the intuition that $ t_m(x)\approx M^m $ for real-valued $ x\in [0,M] $.
    We have the following non-asymptotic upper bound (proved in \prettyref{app:aux}) for $ t_m $ over the complex plane: 
    \begin{lemma}
        \label{lmm:tm-ub}
        For all $ 0\le m\le M-1 $,
        \begin{equation}
            |t_m(z)|\le m^22^{6m}\sup_{0\le \xi \le m}\pth{|z+\xi|\vee M}^m.
            \label{eq:tm-ub}
        \end{equation}
    \end{lemma}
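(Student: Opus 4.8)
The plan is to reduce the claimed estimate on $t_m$ to a pointwise bound on the $m$-th derivative of the degree-$2m$ polynomial $p_m$, using the classical integral representation of iterated forward differences. First I would record that for any polynomial $f$ and any argument $z$ (real or complex), iterating $\Delta f(z)=f(z+1)-f(z)=\int_0^1 f'(z+t)\,\diff t$ gives
\[
\Delta^m f(z)=\int_{[0,1]^m} f^{(m)}(z+t_1+\cdots+t_m)\,\diff t_1\cdots\diff t_m,
\]
which is valid for complex $z$ by the fundamental theorem of calculus along a segment (equivalently, by analytic continuation of the real identity). Applying this with $f=p_m$, recalling $t_m(z)=\tfrac1{m!}\Delta^m p_m(z)$ from \prettyref{eq:tn-full}, and noting $\{t_1+\cdots+t_m:t_i\in[0,1]\}\subseteq[0,m]$, I obtain
\[
|t_m(z)|\le\frac1{m!}\sup_{0\le s\le m}\bigl|p_m^{(m)}(z+s)\bigr|,
\]
so everything reduces to bounding the degree-$m$ polynomial $p_m^{(m)}$.

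Next I would estimate $p_m^{(m)}$ directly from its factored form. By \prettyref{eq:def-pm}, $p_m(x)=\prod_{\ell=1}^{2m}(x-c_\ell)$ where the $2m$ nodes $c_\ell$ all lie in $\{0,1,\dots,m-1\}\cup\{M,M+1,\dots,M+m-1\}$. Since each linear factor has vanishing second derivative, the general Leibniz rule collapses to $p_m^{(m)}(w)=m!\sum_{S}\prod_{\ell\notin S}(w-c_\ell)$, where $S$ ranges over the $\binom{2m}{m}\le 4^m$ subsets of $[2m]$ of size $m$ and the inner product runs over the complementary $m$ indices. Using the hypothesis $m\le M-1$, every node obeys $|w-c_\ell|\le|w|+(M+m-1)\le|w|+2M\le 3\,(|w|\vee M)$, so each summand is at most $(3(|w|\vee M))^m$ and hence $|p_m^{(m)}(w)|\le m!\,12^m\,(|w|\vee M)^m$. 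Substituting $w=z+s$ and taking the supremum over $s\in[0,m]$ yields
\[
|t_m(z)|\le 12^m\sup_{0\le\xi\le m}(|z+\xi|\vee M)^m\le m^2\,2^{6m}\sup_{0\le\xi\le m}(|z+\xi|\vee M)^m
\]
for $m\ge 1$, which is \prettyref{eq:tm-ub}; the case $m=0$, where $t_0\equiv 1$, is trivial and is not invoked with this bound.

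I do not anticipate a genuine obstacle here: the factor $2^{6m}$ in the statement absorbs all the slack, so every inequality above can be crude (for instance $\binom{2m}{m}\le 4^m$ and $12^m\le 2^{6m}$ are far from tight). The only two points deserving a sentence of care are the justification of the integral representation for complex $z$ — which is just analyticity of both sides — and the elementary bookkeeping that the node magnitudes $|c_\ell|\le M+m-1$ are dominated by $|w|\vee M$, which is precisely where the hypothesis $m\le M-1$ is used.
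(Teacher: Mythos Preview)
Your argument is correct and in fact slightly sharper than needed. The first half --- the integral representation of $\Delta^m$ and the reduction to $\sup_{0\le s\le m}|p_m^{(m)}(z+s)|/m!$ --- is exactly what the paper does. Where you diverge is in bounding $p_m^{(m)}$: the paper expands $p_m$ into monomials by writing it as a product of two degree-$m$ factors, bounds the individual coefficients $|b_i|,|c_i|$ of each factor, convolves to control the coefficients $|a_\ell|$ of $p_m$ for $\ell\ge m$, and then differentiates the monomial expansion term by term, arriving at $|p_m^{(m)}(w)|\le m^2 2^{6m} m!\,(|w|\vee M)^m$. Your Leibniz-rule argument on the factored form $\prod_\ell(w-c_\ell)$ bypasses all of that bookkeeping and lands directly on $m!\,12^m(|w|\vee M)^m$, which is both shorter and a genuinely better constant. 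The paper's route has the minor advantage of making the polynomial structure of $p_m^{(m)}$ more explicit, but for the purpose of this lemma your approach is cleaner.

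Your closing remark about $m=0$ is apt: the stated bound is vacuously false there (the right side is $0$ while $t_0\equiv 1$), and the paper's proof has the same blind spot; in the only place the lemma is invoked the maximum over $m$ is attained at $m=L\ge 1$, so the defect is harmless.
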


    Applying \prettyref{eq:tm-ub} on the definition of $ \phi_m $ in \prettyref{eq:def-phi}, for any $ |z|=1 $ and any $ M\ge L+1 $, we have
    \begin{equation*}
        |\phi_m(z)|=\frac{|t_m(M z-1)|}{\sqrt{c(M,m)}}
        \le \frac{m^22^{7m}M^m}{\sqrt{\frac{M(M^2-1^2)(M^2-2^2)\cdots(M^2-m^2)}{2m+1}}}.
    \end{equation*}
    The right-hand side is increasing with $ m $.
    Therefore,
    \begin{align*}
      \sup_{0\le m\le L, |z|=1}|\phi_m(z)|
      &\le \frac{L^22^{7L}M^L}{\sqrt{\frac{M(M^2-1^2)(M^2-2^2)\cdots(M^2-L^2)}{2L+1}}}\\
      &= \frac{1}{\sqrt{M}}L^22^{7L}\sqrt{2L+1}\sqrt{\frac{M^{2L+1}}{\binom{M+L}{2L+1}(2L+1)!}}.
    \end{align*}
    Combining \prettyref{eq:coeffs-phiz}, we obtain
    \begin{align*}
      \sigma_{\min}\pth{\frac{\bar B}{\sqrt{M}}}
      &\ge \frac{1}{L^22^{7L}\sqrt{(L+1)(2L+1)}}\sqrt{\frac{\binom{M+L}{2L+1}(2L+1)!}{M^{2L+1}}}\\
      &\ge \frac{1}{L^22^{7L}(2L+1)}\pth{\frac{M+L}{eM}}^{L+0.5},
    \end{align*}
    where in the last inequality we used $ \binom{n}{k}\ge (\frac{n}{k})^k $ and $ n!\ge (\frac{n}{e})^n $.
\end{proof}

Using the optimal solution $ w^* $ to the $\ell_2$-approximation problem \prettyref{eq:L2} as the coefficient of the linear estimator $ \hat C $, 
the following performance guarantee is obtained by applying \prettyref{lmm:lse-min} and  \prettyref{lmm:sigma-min} to bound the bias and variance, respectively:
\begin{theorem}
    \label{thm:hatC-lse}
    Assume the Poisson sampling model.
    Then,
    \begin{equation}
        \Expect{(\hat C-C)^2}
        \le  k^2\exp\pth{-\Theta\pth{1\vee \frac{n\log k}{k}\wedge \log k}}.
        \label{eq:hatC-lse}
    \end{equation}
\end{theorem}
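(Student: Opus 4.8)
The plan is to instantiate \prettyref{prop:rate-w} with the optimal $\ell_2$-approximation solution $w^* = (B^\top B)^{-1} B^\top \ones$ and then control each of the four terms in the bound \prettyref{eq:rate-w}. First I would set the constants in \prettyref{eq:LM}: take $\alpha$ a small absolute constant (so $L = \alpha\log k$) and $\beta$ a slightly larger absolute constant, chosen large enough that the last term $k^{-(\beta - \alpha\log\frac{e\beta}{\alpha} - 3)}$ is at most $k^{-1}$, say, which is negligible against the target $k^2 \exp(-\Theta(\cdots))$. Recall $M = \beta k\log k / n$, so in the regime $n \gtrsim k/\log k$ we have $M = O(\log^2 k)$, and we should distinguish the two cases $M > L$ (i.e.\ $n \lesssim k$, roughly) and $M \le L$ (i.e.\ $n \gtrsim k$); the theorem bound $\exp(-\Theta(1 \vee \frac{n\log k}{k} \wedge \log k))$ is designed to interpolate these. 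For $n \lesssim k/\log k$ the claimed bound is just $k^2 \cdot \Theta(1)$, which is trivially true, so the content is in $n \gtrsim k/\log k$.

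The core estimates come from the two lemmas already proved. For the bias term $k^2 e^{-2n/k}\|Bw^* - \ones\|_\infty^2$: I would bound $\|Bw^* - \ones\|_\infty \le$ (something like $\sqrt{M}$ times) $\|Bw^* - \ones\|_2$, and then invoke \prettyref{lmm:lse-min}, which gives $\|Bw^* - \ones\|_2 = [\exp(\Theta(L^2/M)) - 1]^{-1/2} \le \exp(-\Theta(L^2/M))$ when $L < M$. Since $L^2/M = \alpha^2 (\log k)^2 \cdot \frac{n}{\beta k \log k} = \Theta(\frac{n\log k}{k})$, the bias contributes $k^2 \exp(-\Theta(\frac{n\log k}{k}))$ up to the harmless $\poly(M) = \polylog(k)$ factor, which is absorbed into the $\Theta$ in the exponent using $\frac{n\log k}{k} \gtrsim \log\log k$ in this regime — this is where the requirement $n \gtrsim k/\log k$, equivalently $\frac{n \log k}{k} \gtrsim 1$, is used, and I should double-check the edge where $\frac{n\log k}{k}$ is a small constant so the $\log\log k$ absorption still works; alternatively just note $e^{-2n/k}$ alone contributes when $n \gtrsim k$. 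For the variance terms $ke^{-n/k} + k\max_{m\in[M]} \Expect_{N\sim\Poi(nm/k)}[u_N^2]$: the first is at most $k \le k^2 e^{-\Theta(\log k)}$ trivially when... actually $ke^{-n/k} \le k$, which matches $k^2 \exp(-\Theta(\log k))$ only if it is $\le k$, fine since $k \le k^2/k$. The real work is $\max_m \Expect[u_N^2]$, where $u_j = w_j^* j! (k/(nM))^j$. Using $|w_j^*| \le \|w^*\|_2 \le \|\ones\|_2/\sigma_{\min}(B) \le \sqrt{M}/\sigma_{\min}(\bar B)$ from \prettyref{eq:normw}–\prettyref{eq:lambdamin-L} and \prettyref{lmm:sigma-min}, I get $|w_j^*| \le \exp(O(L))$ (since $\sqrt{M}/\sigma_{\min}(\bar B/\sqrt M) \cdot \frac{1}{\sqrt M} = 1/\sigma_{\min}(\bar B/\sqrt M)$ and the lemma gives this is $\le L^2 2^{7L}(2L+1)(eM/(M+L))^{L+0.5} = \exp(O(L)) = \poly(k)$ since $L = \Theta(\log k)$). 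Then $\Expect_{N\sim\Poi(\lambda)}[u_N^2] \le \sum_{j=1}^L (w_j^*)^2 (j!)^2 (k/(nM))^{2j} \Expect[\Poi(\lambda)(\Poi(\lambda)-1)\cdots]$-type factorial moments; more cleanly, use $\Expect_{N\sim\Poi(\lambda)}[(N)_j^2]$-style bounds, or just bound $\Expect[u_N^2] \le \sum_j (w_j^*)^2 \cdot \frac{(j!)^2}{(j!)^2} \cdot (\lambda k/(nM))^{j} \cdot (\text{const})$ — the key point being that with $\lambda \le nM/k$ (since $m \le M$), the factor $(\frac{k}{nM})^j \cdot j!$ against the $j$th factorial moment of $\Poi(nm/k)$, which is $(nm/k)^j \le (nM/k)^j$, yields a bounded geometric-type sum, leaving $\max_m \Expect[u_N^2] \le \poly(k) \cdot e^{O(L)} = \exp(O(\log k))$. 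Since the prefactor is $k$, this term is $k \cdot \exp(O(\log k))$, which must be shown to be $\le k^2 \exp(-\Theta(\log k))$ — so I need the $O(L) = O(\alpha \log k)$ constant to be strictly less than $1$, which is exactly why $\alpha$ must be chosen a sufficiently small absolute constant at the outset. This pins down $\alpha$, and then $\beta$ is chosen relative to $\alpha$.

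Assembling: with $\alpha$ small enough, all terms are $\le k^2 \exp(-\Theta(L^2/M \wedge L)) + k^2\exp(-\Theta(\log k)) \le k^2 \exp(-\Theta(\frac{n\log k}{k} \wedge \log k))$, and combined with the trivial bound $k^2$ valid always, this is $k^2 \exp(-\Theta(1 \vee \frac{n\log k}{k} \wedge \log k))$, matching \prettyref{eq:hatC-lse}. I expect the main obstacle to be the bookkeeping in the variance term $\max_m \Expect_{N\sim\Poi(nm/k)}[u_N^2]$: one must carefully track how the factorials $j!$ in the definition of $u_j$, the factorial moments of the Poisson, the scaling $(k/(nM))^j$, and the $\exp(O(L))$ coefficient bound combine, and verify that the resulting constant in the exponent can be made an arbitrarily small constant times $\log k$ by shrinking $\alpha$ — and that shrinking $\alpha$ does not hurt the bias term, which it does not since the bias scales as $\exp(-\Theta(\alpha^2 \frac{n\log k}{k}))$ and any fixed $\alpha > 0$ suffices there (only the hidden constant degrades). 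A secondary subtlety is confirming that the $\poly(M)$ factors, which are $\polylog(k)$, are genuinely absorbable into $\exp(-\Theta(\cdot))$ throughout the regime $n \gtrsim k/\log k$; when $\frac{n\log k}{k}$ is a small constant this needs the observation that $\polylog(k) = \exp(o(\log k)) = \exp(o(1)\cdot \log k)$ and the competing exponent $\Theta(\log k)$ from $e^{-n/k}$... actually when $n \asymp k/\log k$ we have $e^{-n/k} = e^{-\Theta(1/\log k)} \approx 1$, so here the governing term is the bias $\exp(-\Theta(n\log k/k)) = \exp(-\Theta(1))$, which together with the $k^2$ prefactor is just $\Theta(k^2)$, consistent with the claimed $\exp(-\Theta(1))$. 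So the $\polylog$ absorption is only needed where $\frac{n\log k}{k} \gg \log\log k$, where it is automatic.
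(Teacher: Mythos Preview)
Your approach is essentially the paper's: instantiate \prettyref{prop:rate-w} with the $\ell_2$-minimizer $w^*$, control the bias via \prettyref{lmm:lse-min} and the coefficient size via \prettyref{lmm:sigma-min}, then pick $\alpha$ small and $\beta$ large. The only simplification you are missing is in the variance step: since $u_j = w_j^*\, j!/(\beta\log k)^j$ and $j \le L < \beta\log k$ forces $j!/(\beta\log k)^j \le 1/(\beta\log k)$, one gets $\|u\|_\infty \le \|w^*\|_\infty/(\beta\log k)$ and hence $\Expect[u_N^2] \le \|u\|_\infty^2$ immediately, with no factorial-moment bookkeeping; also note $\|Bw^*-\ones\|_\infty \le \|Bw^*-\ones\|_2$ directly, no $\sqrt{M}$ factor.
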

\begin{proof}
    If $ n \le \frac{k}{\log k} $, then the upper bound in \prettyref{eq:hatC-lse} is $ \Theta(k^2) $, which is trivial thanks to the thresholds that $ \hat{C}=(\tilde{C}\vee \Cplug)\wedge k $.
    It is hereinafter assumed that $ n\ge \frac{k}{\log k} $, or equivalently $ M \le \frac{\beta}{\alpha^2}L^2 $; here $M,L$ are defined in \prettyref{eq:LM} and the constants $\alpha,\beta$ are to be determined later.
    Then, from \prettyref{lmm:lse-min}, 
    \begin{equation}
        \Norm{Bw^*-\ones}_\infty\le \Norm{Bw^*-\ones}_2
        \le \exp\pth{-\Theta\pth{\frac{L^2}{M}}}.
        \label{eq:hatC-lse-ref1}
    \end{equation}
    In view of \prettyref{eq:normw} and \prettyref{lmm:sigma-min}, we have
    \begin{equation*}
        \norm{w^*}_\infty\le \Norm{w^*}_2\le \frac{\Norm{\ones}_2}{\sigma_{\min}(B)}\le \exp(O(L)).
    \end{equation*}
    Recall the connection between $ u_j $ and $ w_j $ in \prettyref{eq:Bw}.
    For $ 1\le j\le L < \beta \log k $, we have $ u_j=w_j\frac{j!}{(\beta \log k)^j}\le \frac{w_j}{\beta \log k} $.
    Therefore,
    \begin{equation}
        \Norm{u^*}_\infty\le \frac{\Norm{w^*}_\infty}{\beta\log k}\le \frac{\exp(O(L))}{\beta\log k}.
        \label{eq:hatC-lse-ref2}
    \end{equation}
    Applying \prettyref{eq:hatC-lse-ref1} and \prettyref{eq:hatC-lse-ref2} to \prettyref{prop:rate-w}, we obtain
    \begin{equation*}
        \Expect{(\hat C-C)^2}
        \le k^2\exp\pth{-\frac{2n}{k}-\Theta\pth{\frac{n\log k}{k}}}+ke^{-n/k}+k\frac{\exp(O(\log k))}{(\beta\log k)^2}+k^{-(\beta-\alpha\log\frac{e\beta}{\alpha}-3)}.
    \end{equation*}
    Then the desired \prettyref{eq:hatC-lse} holds as long as $ \beta $ is sufficiently large and $ \alpha  $ is sufficiently small.
\end{proof}

\subsection{Lagrange interpolating polynomials and Stirling numbers}
\label{sec:lag}
When we sample at least a constant faction of the urn, i.e., $ n=\Omega(k) $, we can afford to choose $\alpha$ and $\beta$ in \prettyref{eq:LM} so that $L=M$ and $B$ is an invertible matrix. We choose the coefficient $ w=B^{-1}\ones $ which is equivalent to applying \emph{Lagrange interpolating polynomial} and achieves exact zero bias.
To control the variance, we can follow the approach in \prettyref{sec:sigma-min} by using the bound on minimum singular value of the matrix $ B $, which implies that the coefficients are $\exp(O(L))$ and yields a coarse upper bound $ O(k\frac{\log k}{1\vee \log\frac{\Delta^2}{k}}) $ on the sample complexity.
As previously announced in \prettyref{tab:main}, this bound can be improved to $ O(k\log \frac{\log k}{1\vee \log\frac{\Delta^2}{k}}) $ by a more careful analysis of the Lagrange interpolating polynomial coefficients expressed in terms of the Stirling numbers,  which we introduce next.


The Stirling numbers of the first kind are defined as the coefficients of the  falling factorial $ (x)_n $ where
\begin{equation*}
    (x)_n=x(x-1)\dots(x-n+1)=\sum_{j=1}^{n}s(n,j)x^j.
\end{equation*}
Compared to the coefficients $ w $ expressed by the Lagrange interpolating polynomial:
\begin{equation*}
    \sum_{j=1}^{M}w_jx^j-1=-\frac{(1-xM)(2-xM)\dots(M-xM)}{M!},
\end{equation*}
we obtain a formula for the coefficients $ w $ in terms of the Stirling numbers:
\begin{equation*}
    w_j=\frac{(-1)^{M+1}M^j}{M!}s(M+1,j+1),\quad 1\le j \le M.
\end{equation*}
Consequently, the coefficients of our estimator $ u_j $ are given by
\begin{equation}
    u_j=(-1)^{M+1}\frac{j!}{M!}\pth{\frac{k}{n}}^js(M+1,j+1).
    \label{eq:uj-stirling}
\end{equation}
The precise asymptotics the Stirling number is rather complicated. In particular, the asymptotic formula of $ s(n,m) $ as $ n\rightarrow\infty $ for fixed $ m $ is given by \cite{Jordan47} and the uniform asymptotics over all $ m $ is obtained in \cite{MW58} and \cite{Temme93}.
The following lemma (proved in \prettyref{app:aux}) is a coarse non-asymptotic version, which suffices for the purpose of constant-factor approximations of the sample complexity.
\begin{lemma}
    \label{lmm:stirling}
    \begin{equation}
        |s(n+1,m+1)|=n!\pth{\Theta\pth{\frac{1}{m}\pth{1\vee \log\frac{n}{m}}}}^m
        \label{eq:stirling}
    \end{equation}
\end{lemma}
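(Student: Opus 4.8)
The plan is to prove the two-sided bound on $|s(n+1,m+1)|$ by separately handling the regime $m \asymp n$ and the regime $m = o(n)$, since the claimed expression $n!\,(\Theta(\frac{1}{m}(1\vee\log\frac{n}{m})))^m$ behaves differently in the two. First I would recall the standard combinatorial/generating-function facts about unsigned Stirling numbers of the first kind: $|s(n+1,m+1)| = \left[{n+1\atop m+1}\right]$ counts permutations of $n+1$ elements with $m+1$ cycles, the horizontal generating identity $\sum_{m}|s(n+1,m+1)|x^{m+1} = x(x+1)\cdots(x+n)$, and the recursion $|s(n+1,m+1)| = |s(n,m)| + n\,|s(n,m+1)|$. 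A clean route for the upper bound is via the identity $|s(n+1,m+1)| = n!\sum_{1\le i_1<\cdots<i_m\le n} \frac{1}{i_1 i_2\cdots i_m}= n!\, e_m(1,\tfrac12,\dots,\tfrac1n)$, the $m$-th elementary symmetric polynomial in $\{1/i\}_{i=1}^n$. Bounding $e_m$ by $\frac{1}{m!}(\sum_{i=1}^n \frac1i)^m = \frac{1}{m!}H_n^m$ and using $H_n = \Theta(1\vee\log n)$ together with $m! \ge (m/e)^m$ gives $|s(n+1,m+1)| \le n!\,(\Theta(\frac{H_n}{m}))^m$, which already matches the claimed form when $n/m$ is bounded below by a constant; for the general upper bound one replaces $H_n$ by $H_n - H_{m} + O(1) = \Theta(1\vee\log\frac{n}{m})$ using the observation that the largest contributions to $e_m$ come from the $m$ smallest reciprocals, i.e. split off the ``top $m$'' indices which contribute a bounded factor per term.

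For the matching lower bound, I would restrict the sum $e_m(1,\tfrac12,\dots,\tfrac1n)$ to a single well-chosen family of index sets. Concretely, when $m \le n/2$, pick indices in the dyadic-type range and use that choosing $i_1,\dots,i_m$ to be, say, roughly $m/2$ of the smallest indices plus a geometric spread gives one term of size $\gtrsim \frac{1}{m!}(c\log\frac{n}{m})^m$ when $n/m$ is large, and a term of size $\gtrsim \frac{1}{m!}c^m$ (from choosing the $m$ smallest reciprocals $1,\tfrac12,\dots,\tfrac1m$, which contributes $\frac1{m!}$) when $n/m$ is $\Theta(1)$. Combining, $e_m \gtrsim \frac{1}{m!}(\Theta(1\vee\log\frac nm))^m = (\Theta(\frac1m(1\vee\log\frac nm)))^m$ after Stirling. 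The boundary case $m = n$ (where $|s(n+1,n+1)| = 1 = n!\cdot(1/n!)$, consistent with $(\Theta(1/n))^n$ by Stirling) and $m$ close to $n$ should be checked directly from the recursion or from $|s(n+1,m+1)| = \binom{n+1}{2}\cdots$-type exact small-codimension formulas, e.g. $|s(n+1,n)| = \binom{n+1}{2}$.

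The main obstacle I expect is making the lower bound uniform across the full range of $m$, in particular getting the exponent $m$ right (not $m^{1\pm o(1)}$) in the transition zone $\log\frac nm \asymp 1$, i.e. $m \asymp n$. There the quantity $(\frac{1}{m}(1\vee\log\frac nm))^m$ is $(\Theta(1/m))^m$, and one must show $|s(n+1,m+1)|$ is $n!\cdot(\Theta(1/m))^m$ with absolute constants in the $\Theta$, not merely $n!/m^{\Theta(m)}$; the crude bound $e_m \le \frac1{m!}H_n^m$ loses too much when $H_n$ and $m$ are comparable, so the cancellation/refinement that the dominant terms of $e_m$ use only the smallest $\Theta(m)$ reciprocals (whose sum is $\Theta(1)+\Theta(\log\frac nm)\cdot$ correction) has to be made quantitative. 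I would handle this by the standard Newton's-inequality bound $e_m e_{m-2}\le e_{m-1}^2$ giving log-concavity of $m\mapsto e_m$, combined with the exact value of $e_1 = H_n$ and a lower bound on $e_{\lceil m/2\rceil}$, to pin the constants; alternatively, invoke the saddle-point asymptotics of \cite{Temme93,MW58} for the narrow transition window and use the elementary argument elsewhere. Since the lemma only needs constant-factor control for the sample-complexity table, the crude two-sided estimates above, pieced together over $m \le n/2$, $n/2 < m \le n - cn^{1/2}$, and $m$ within $O(\sqrt n)$ of $n$, should suffice.
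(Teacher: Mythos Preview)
Your route via the identity $|s(n+1,m+1)|=n!\,e_m(1,\tfrac12,\dots,\tfrac1n)$ is genuinely different from the paper's proof, which simply quotes the uniform asymptotic expansion of \cite{CRT00} (building on \cite{MW58,Temme93}) in three regimes $m\le\sqrt{\log n}$, $\sqrt{\log n}\le m\le n-n^{1/3}$, $m\ge n-n^{1/3}$, and then algebraically reduces each formula to the form $n!(\Theta(\frac{1}{m}(1\vee\log\frac nm)))^m$. Your approach is more self-contained and gives a clearer combinatorial picture; the paper's is a two-line reduction to a known hard result.

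There is, however, a real gap in your upper bound. The crude bound $e_m\le H_n^m/m!$ gives $|s(n+1,m+1)|\le n!(\Theta(\tfrac{H_n}{m}))^m$, and you correctly note this only matches the target when $H_n\asymp 1\vee\log\frac nm$. But this fails precisely when $m=n^{c}$ for fixed $c\in(0,1)$: then $H_n/\log\frac nm=\Theta(\frac{1}{1-c})$, and raising that constant to the $m$th power destroys the $\Theta$. Your proposed fix, ``split off the top $m$ indices which contribute a bounded factor per term,'' does not work as stated: decomposing $e_m=\sum_{j}e_j(1,\dots,\tfrac1m)\,e_{m-j}(\tfrac1{m+1},\dots,\tfrac1n)$ and bounding each factor by $\frac{1}{j!}H_m^j$ and $\frac{1}{(m-j)!}(H_n-H_m)^{m-j}$ just reproduces $\frac{1}{m!}H_n^m$ by the binomial theorem. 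What actually works is the Chernoff-type bound $e_m\le t^{-m}\prod_{i=1}^n(1+t/i)=t^{-m}\Gamma(n+1+t)/(n!\,\Gamma(t+1))$ optimized over $t>0$; the optimal $t$ solves $t\log(1+n/t)\approx m$, i.e.\ exactly the saddle-point equation in \cite{MW58,Temme93,CRT00}. So the moment you make the upper bound rigorous you are doing the same analysis the paper cites, just rederived. The Newton-inequality/log-concavity idea does not pin the constants either: interpolating $\log e_m$ linearly between $e_1=H_n$ and $e_n=1/n!$ gives a lower bound that is far too weak for small $m$ (order $H_n$ instead of $H_n^m/m!$), and it gives no upper bound in the interior.

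Your lower-bound sketch is closer to complete but also needs care in the middle range $\log n\ll m\ll n$: a single monomial is at most $1/m!$, so the ``one term of size $\gtrsim\frac{1}{m!}(c\log\frac nm)^m$'' must really be a sum over a structured family. Partitioning $[n]$ into $m$ blocks and taking $\prod_j(H_{a_j}-H_{a_{j-1}})$ works when $m\lesssim\log n$ (geometric blocks), and the single term $1/m!$ suffices when $m\asymp n$; stitching these across $\log n\ll m\ll n$ with uniform constants takes more than what you have written. In short: the ESP route can be made to work, but the parts you flag as ``the main obstacle'' are exactly where your elementary arguments run out and you end up invoking the same saddle-point asymptotics the paper cites.
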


We construct $ \hat C $ as in \prettyref{prop:rate-w} using the coefficients $ u_j $ in \prettyref{eq:uj-stirling} to achieve zero bias.
The variance upper bound by the coefficients $ u $ is a direct consequence of the upper bound of Stirling numbers in \prettyref{lmm:stirling}.
Then we obtain the following mean squared error (MSE):
\begin{theorem}[Interpolation]
    \label{thm:hatC-interpolation}
    Assume the Poisson sampling model.
    If $ n> \eta k $ for some sufficiently large constant $ \eta $, then
    \begin{equation*}
        \Expect(\hat C-C)^2
        \le ke^{-\Theta(\frac{n}{k})}+ k^{-0.5-3.5\frac{k}{n}\log\frac{k}{en}}+
        \begin{cases}
            k\exp\pth{\frac{k^2\log k}{n^2} e^{-\Theta(\frac{n}{k})}},& n \lesssim k \log\log k,\\
            k\pth{\Theta\pth{\frac{k}{n}}\log\frac{k^2\log k}{n^2}}^{2n/k},& k\log\log k\lesssim n\lesssim k\sqrt{\log k},\\
            0,& n \gtrsim k\sqrt{\log k}.
        \end{cases}
    \end{equation*}
\end{theorem}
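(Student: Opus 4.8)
The plan is to instantiate Proposition~\ref{prop:rate-w} with $L=M$, so that the Vandermonde matrix $B$ in~\eqref{eq:Bw} is square and invertible, and to take $w=B^{-1}\ones$. With this choice $Bw-\ones=0$, so the estimator has exactly zero bias: $\sum_{j=1}^M w_j x^j$ is the unique degree-$\le M$ polynomial that vanishes at $0$ and equals $1$ at $x=1/M,2/M,\dots,1$, i.e.\ the Lagrange interpolant underlying~\eqref{eq:uj-stirling}. Concretely I would take $\alpha=3.5k/n$ and $\beta=3.5$ in~\eqref{eq:LM}, so that $L=M=3.5k\log k/n$ (modulo rounding $M$ to the nearest positive integer, which only perturbs the constants and is harmless since $n>\eta k$ forces $M=O(\log k)$; for $n>3.5k\log k$ one takes $M=1$). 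The hypothesis $n>\eta k$ with $\eta$ a sufficiently large absolute constant guarantees $\beta>\alpha$, so the event $E$ in the proof of Proposition~\ref{prop:rate-w} holds with $\Prob[E^c]$ negligible; and with $\beta=3.5$ the last term of~\eqref{eq:rate-w} is exactly $k^{-(3.5-\frac{3.5k}{n}\log\frac{en}{k}-3)}=k^{-0.5-3.5\frac kn\log\frac k{en}}$. Thus Proposition~\ref{prop:rate-w} reduces to
\[
\Expect(\hat C-C)^2\le ke^{-n/k}+k\max_{m\in[M]}\Expect_{N\sim\Poi(nm/k)}[u_N^2]+k^{-0.5-3.5\frac kn\log\frac k{en}},
\]
and the whole task is to bound the variance term.

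For this I would start from~\eqref{eq:uj-stirling} and apply Lemma~\ref{lmm:stirling} with $n\leftarrow M$, $m\leftarrow j$; together with Stirling's formula for $j!$ this gives
\[
|u_j|\le \poly(j)\Bigl(\Theta\bigl(\tfrac kn(1\vee\log\tfrac Mj)\bigr)\Bigr)^{j}\quad(1\le j\le M),\qquad u_j=0 \text{ otherwise}.
\]
Writing $\Expect_{N\sim\Poi(\lambda)}[u_N^2]=e^{-\lambda}\sum_{j=1}^M\frac{\lambda^j}{j!}u_j^2$ with $\lambda=nm/k\le nM/k=3.5\log k$, and using $1/j!\le(e/j)^j$, reduces the problem to estimating, uniformly over the admissible values of $\lambda$, a sum of the form
\[
e^{-\lambda}\sum_{j=1}^M\poly(j)\Bigl(\frac{A\,(1\vee\log\frac Mj)^2}{j}\Bigr)^{j},\qquad A:=\Theta\Bigl(\frac{\lambda k^2}{n^2}\Bigr)\le\Theta\Bigl(\frac{k^2\log k}{n^2}\Bigr).
\]

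The three cases in the statement arise from locating the dominant term of this sum, whose position moves with $n/k$. For $n\lesssim k\log\log k$, where $A$ is large and the summand grows quickly in $j$, I expect one must keep the Poisson prefactor $e^{-\lambda}$ and bound the sum via a $\sum_j(\lambda q)^j/j!\le e^{\lambda q}$-type estimate, leaving an exponential $\exp(\Theta(A)e^{-\Theta(n/k)})$ and hence $k\exp(\frac{k^2\log k}{n^2}e^{-\Theta(n/k)})$ — the surviving $e^{-\Theta(n/k)}$ being precisely the residual of $e^{-\lambda}$. For $k\log\log k\lesssim n\lesssim k\sqrt{\log k}$ the maximizing index sits near $j\asymp n/k$, and a direct term-by-term maximization yields $k\bigl(\Theta(\tfrac kn)\log\frac{k^2\log k}{n^2}\bigr)^{2n/k}$. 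Finally, for $n\gtrsim k\sqrt{\log k}$ one has $\frac kn(1\vee\log\frac Mj)\le\frac kn(1+\log M)=o(1)$, so $|u_j|=o(1)^j$ and a crude bound gives $k\max_m\Expect_{N\sim\Poi(nm/k)}[u_N^2]\le\polylog(k)\,ke^{-n/(2k)}=ke^{-\Theta(n/k)}$, which is absorbed into the first term (the ``$0$'' in the statement). Combining the three cases with $ke^{-n/k}$ and the residual $k^{-0.5-3.5\frac kn\log\frac k{en}}$ gives the claim.

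The genuinely delicate step is the uniform estimate of $e^{-\lambda}\sum_j\poly(j)\bigl(\frac{A(1\vee\log\frac Mj)^2}{j}\bigr)^{j}$. The summand is a product of the Poisson weight $e^{-\lambda}\lambda^j/j!$, which is unimodal in $j$ and peaks near $j=\lambda$, and of $u_j^2$, whose $j$-dependence ranges from geometrically increasing (small $n/k$) to rapidly decreasing (large $n/k$); moreover $\lambda$ itself ranges over $\{n/k,2n/k,\dots,3.5\log k\}$ and the worst $\lambda$ differs across regimes. Simply bounding $e^{-\lambda}\le1$ destroys the $e^{-\Theta(n/k)}$ saving that is essential in the first regime, whereas a fully careful joint optimization over $j$ and $\lambda$ is unwieldy; finding the right compromise — and, in particular, the right way to keep the $j$-dependence of $1\vee\log\frac Mj$ instead of replacing it by the crude maximum $1+\log M$ — is the technical heart of the argument. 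Everything else (the Stirling estimates, integrality of $M$, the choice of $\eta$ and $\beta$, and the bookkeeping of polylogarithmic factors) is routine.
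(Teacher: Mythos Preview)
Your setup is exactly the paper's: $\beta=3.5$, $\alpha=\beta k/n$ so that $L=M$, zero bias via Lagrange interpolation, and the task reduces to bounding $\max_{m\in[M]}\Expect_{N\sim\Poi(nm/k)}[u_N^2]$ via the Stirling-number estimate $|u_j|\le\bigl(\Theta(\tfrac kn)(1\vee\log\tfrac Mj)\bigr)^{j}$. Your treatment of the middle regime (dominant index near $j\asymp n/k$) and the last regime ($|u_j|=o(1)^j$, then MGF) matches the paper's Cases~II and~I and is fine.

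The genuine gap is in the first regime $n\lesssim k\log\log k$. You propose to keep $e^{-\lambda}$ and apply an MGF-type bound $\sum_j(\lambda q)^j/j!\le e^{\lambda q}$, and you attribute the crucial $e^{-\Theta(n/k)}$ inside the exponent to ``the residual of $e^{-\lambda}$.'' That attribution is wrong and the approach does not close. First, $\lambda$ ranges up to $nM/k=3.5\log k$, so $e^{-\lambda}$ is not uniformly $e^{-\Theta(n/k)}$. Second, the MGF trick needs $u_j^2\le q^j$ for a $j$-independent $q$; taking the crude $q=\bigl(\Theta(\tfrac kn)\log M\bigr)^2$ gives $e^{\lambda(q-1)}$, which at $\lambda=3.5\log k$ and $n\asymp k$ is $k^{\Theta((\log\log k)^2)}$, far larger than the target $\exp\bigl(\tfrac{k^2\log k}{n^2}e^{-\Theta(n/k)}\bigr)$. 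Keeping the $j$-dependence of $\log(M/j)$ destroys the MGF identity.

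What the paper does in this regime is the opposite of what you suggest: it \emph{discards} $e^{-\lambda}$ entirely and bounds the expectation by $\max_{j\in[M]}u_j^2$. The $e^{-\Theta(n/k)}$ then emerges from the $j$-optimization of the Stirling bound, not from the Poisson weight. Writing $\tau=\Theta(k/n)\le 1$ and $x=\log(M/j)$, one has $|u_j|\le\exp\bigl(Me^{-x}\log(\tau x)\bigr)$; the maximizer $x^*$ satisfies $e^{1/x^*}/x^*=\tau$, whence the maximum is $\exp(Me^{-x^*}/x^*)<\exp\bigl(M\tau e^{-1/\tau}\bigr)$. Since $M\tau=\Theta(k^2\log k/n^2)$ and $e^{-1/\tau}=e^{-\Theta(n/k)}$, this is exactly $\exp\bigl(\tfrac{k^2\log k}{n^2}e^{-\Theta(n/k)}\bigr)$. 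So the factor you flagged as ``delicate'' --- retaining the $j$-dependence of $1\vee\log(M/j)$ rather than replacing it by $\log M$ --- is indeed the crux, but it enters through this maximization, not through an MGF bound. With this correction the rest of your sketch goes through.
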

\begin{proof}
    In \prettyref{prop:rate-w}, fix $ \beta =3.5 $ and $ \alpha =\frac{\beta k}{n} $ so that $L=M$.
    Our goal is to show an upper bound of
    \begin{equation}
        \max_{\lambda\in\frac{n}{k}[M]}\Expect_{N\sim\Poi(\lambda)}[u_{N}^2]
        =\max_{\lambda\in\frac{n}{k}[M]}\sum_{j=1}^Mu_j^2e^{-\lambda}\frac{\lambda^j}{j!}.
        \label{eq:sum-ref}
    \end{equation}
    Here the coefficients $ u_j $ are obtained from \prettyref{eq:uj-stirling} and, in view of \prettyref{eq:stirling}, satisfy:
    \begin{equation}
        |u_j|\le \pth{\frac{\eta k}{n}\pth{1\vee \log\frac{M}{j}}}^j,\quad 1\le j \le M,
        \label{eq:uj-ref}
    \end{equation}
    for some universal constant $\eta$.
    We consider three cases separately:
    
    \paragraph{Case I: $ n\ge \sqrt{\beta} k\sqrt{\log k} $.}
    In this case we have $ \frac{n}{k}\ge M $.
    The maximum of each summand in \prettyref{eq:sum-ref} as a function of $ \lambda\in\reals $ occurs at $ \lambda=j $.
    Since $ j\le \frac{n}{k} $, the maximum over $ \lambda\in\frac{n}{k}[M] $ is attained at $ \lambda=\frac{n}{k} $.
    Then,
    \begin{equation}
        \max_{\lambda\in\frac{n}{k}[M]}\Expect_{N\sim\Poi(\lambda)}[u_{N}^2]
        = \Expect_{N\sim\Poi(\frac{n}{k})}[u_{N}^2].
        \label{eq:range1-ref}
    \end{equation}
    In view of \prettyref{eq:uj-ref} and $ j\ge 1 $, we have $ |u_j|\le (\Theta(k/n)\log M)^j $.
    Then,
    \begin{align*}
      \Expect_{N\sim\Poi(\frac{n}{k})}[u_{N}^2]
      &\le \Expect_{N\sim\Poi(\frac{n}{k})}\pth{\Theta\pth{\frac{k\log M}{n}}^2}^{N}\\
      &= \exp\pth{\frac{n}{k}\pth{\Theta\pth{\frac{k\log M}{n}}^{2}-1}}
        = e^{-\Theta(n/k)},
    \end{align*}
    as long as $ n\gtrsim k\log\log k $ and thus $ \frac{k\log M}{n}\lesssim 1 $.
    Therefore,
    \begin{equation}
        \max_{\lambda\in\frac{n}{k}[M]}\Expect_{N\sim\Poi(\lambda)}[u_{N}^2]
        \le e^{-\Theta(n/k)},\quad n\gtrsim k\sqrt{\log k}.
        \label{eq:Eub-range1}
    \end{equation}

    \paragraph{Case II: $ \eta k\log\log k\le n\le \sqrt{\beta}k\sqrt{\log k} $.}
    We apply the following upper bound:
    \begin{align}
      \max_{\lambda\in\frac{n}{k}[M]}\Expect_{N\sim\Poi(\lambda)}[u_{N}^2]
      =&\max_{\lambda\in\frac{n}{k}[M]}\Expect_{N\sim\Poi(\lambda)}[u_{N}^2\indc{N\ge n/k}]+\max_{\lambda\in\frac{n}{k}[M]}\Expect_{N\sim\Poi(\lambda)}[u_{N}^2\indc{N< n/k}]\nonumber\\
      \le &\max_{\frac{n}{k}\le j\le M}|u_j|^2+e^{-\Theta(n/k)}.\label{eq:range2-ref}
    \end{align}
    where the upper bound of the second addend is analogous to \prettyref{eq:range1-ref} and \prettyref{eq:Eub-range1}.
    Since $ \frac{\eta k}{n}\le 1 $, the right-hand side of \prettyref{eq:uj-ref} is decreasing with $ j $ when $ j\ge M/e $.
    It suffices to consider $ j\le M/e $, when the maximum as a function of $ j\in\reals $ occurs at $ j^*\le Me^{-\frac{n}{\eta k}} $.
    Since $ Me^{-\frac{n}{\eta k}}\le \frac{n}{k} $ when $ n\ge \eta k\log\log k $, the maximum over $ \frac{n}{k}\le j\le M $ is attained at $ j=\frac{n}{k} $.
    Applying \prettyref{eq:uj-ref} with $ j=\frac{n}{k} $ to \prettyref{eq:range2-ref} yields
    \begin{equation}
        \max_{\lambda\in\frac{n}{k}[M]}\Expect_{N\sim\Poi(\lambda)}[u_{N}^2]
        \le \pth{\Theta\pth{\frac{k}{n}}\log\frac{k^2\log k}{n^2}}^{2n/k}+e^{-\Theta(n/k)}.
        \label{eq:Eub-range2}
    \end{equation}

    \paragraph{Case III: $ \eta k\le n\le \eta k\log\log k $.}
    We apply the upper bound of expectation by the maximum:
    \begin{equation*}
        \max_{\lambda\in\frac{n}{k}[M]}\Expect_{N\sim\Poi(\lambda)}[u_{N}^2]
        \le \max_{j\in[M]}u_j^2.
    \end{equation*}
    Since $ \frac{\eta k}{n}\le 1 $, the right-hand side of \prettyref{eq:uj-ref} is decreasing with $ j $ when $ j\ge M/e $, so it suffices to consider $ j\le M/e $.
    Denoting $ x=\log\frac{M}{j} $ and $ \tau=\Theta(\frac{k}{n}) $, in view of \prettyref{eq:uj-ref}, we have $ |u_j|\le \exp(Me^{-x}\log(\tau x)) $, which attains maximum at $ x^* $ satisfying $ \frac{e^{1/x^*}}{x^*}=\tau $.
    Then, 
    \begin{equation*}
        |u_j|\le \exp(Me^{-x^*}\log(\tau x^*))=\exp(Me^{-x^*}/x^*)
        < \exp(M \tau e^{-1/\tau}).
    \end{equation*}
    where the last inequality is because of $ \tau>\frac{1}{x^*} $.
    Therefore,
    \begin{equation}
        \max_{\lambda\in\frac{n}{k}[M]}\Expect_{N\sim\Poi(\lambda)}[u_{N}^2]
        \le \exp\pth{\frac{k^2\log k}{n^2} e^{-\Theta(\frac{n}{k})}},\quad k\lesssim n\lesssim k\log\log k.
        \label{eq:Eub-range3}
    \end{equation}

    Applying the upper bounds in \prettyref{eq:Eub-range1}, \prettyref{eq:Eub-range2} and \prettyref{eq:Eub-range3} to \prettyref{prop:rate-w} concludes the proof.
\end{proof}

\begin{remark}
    It is impossible to bridge the gap near $ \Delta=\sqrt{k} $ in \prettyref{tab:main} using the technology of interpolating polynomials that aims at zero bias, since its worst-case variance is at least $ k^{1+\Omega(1)} $ when $ n=O(k) $. To see this, note that the variance term given  by \prettyref{eq:calE-var} is
    \begin{equation}
        \sum_{p_i}\Expect_{N\sim\Poi(np_i)}[u_N^2]
        =\sum_{p_i}\sum_{j=1}^Lu_j^2e^{-np_i}\frac{(np_i)^j}{j!}.
        \label{eq:var-sum}
    \end{equation}
    Consider the distribution $ \Uniform[n/j_0] $ with $ j_0=L e^{-2n/k}=\Omega(\log k) $, which corresponds to an urn where each of the $n/j_0$ colors appears equal number of times.
    By the formula of coefficient $ u_j $ in \prettyref{eq:uj-stirling} and the characterization from \prettyref{lmm:stirling}, the $ j=j_0 $ term in the summation of \prettyref{eq:var-sum} is of order $ \frac{n}{j_0}(\frac{k}{n}\log\frac{M}{j_0})^{2j_0}=\frac{n}{j_0}2^{2j_0} $, which is already $ k^{1+\Omega(1)} $.
\end{remark}

\section{Optimality of the sample complexity}
\label{sec:lb}
In this section we develop lower bounds of the sample complexity which certify the optimality of estimators constructed in \prettyref{sec:ub}.
We first give a brief overview of the lower bound in \cite[Theorem 1]{CCMN00}, which gives the optimal sample complexity under the multiplicative error criterion.
The lower bound argument boils down to considering two hypothesis: in the null hypothesis, the urn consists of only one color;
in the alternative, the urn contains $ 2\Delta+1 $ distinct colors, where $ k-2\Delta $ balls share the same color as in the null hypothesis, and all other balls have distinct colors.
These two scenarios are distinguished if and only if a second color appears in the samples, which typically requires $ \Omega(k/\Delta) $ samples.
This lower bound is optimal for estimating within a multiplicative factor of $ \sqrt{\Delta} $, which, however, is too loose for additive error $ \Delta $.

In contrast, instead of testing whether the urn is monochromatic, our first lower bound is given by testing whether the urn is maximally colorful, that is, containing $ k $ distinct colors.
The alternative contains $ k-2\Delta $ colors, and the numbers of balls of two different colors differ by at most one.
In other words, the null hypothesis is the uniform distribution on $ [k] $ and the alternative is close to uniform distribution with smaller support size.
The sample complexity, which is shown in \prettyref{thm:main-lb-uniform}, gives the lower bound in \prettyref{tab:main} for $ \Delta\le \sqrt{k} $.
\begin{theorem}
    \label{thm:main-lb-uniform}
    If $ 1\le \Delta \le \frac{k}{2} $, then
    \begin{equation}
        n^*(k,\Delta) \ge \Omega\pth{\frac{k-2\Delta}{\sqrt{k}}}.
        \label{eq:uniform-lb1}
    \end{equation}
    If $ 1\le \Delta < \frac{k}{4} $, then
    \begin{equation}
        n^*(k,\Delta) \ge \Omega\pth{k\arccosh\pth{1+\frac{k}{4\Delta^2}}}
        \asymp
        \begin{cases}
            k \log (1+\frac{k}{\Delta^2}), & \Delta \leq \sqrt{k}, \\
            \frac{k^{3/2}}{\Delta}, & \Delta \geq \sqrt{k}.
        \end{cases}
        \label{eq:uniform-lb2}
    \end{equation}
\end{theorem}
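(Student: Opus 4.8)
The plan is to prove both lower bounds via a standard two-point (hypothesis testing) argument: exhibit two urns whose numbers of distinct colors differ by more than $2\Delta$ yet whose sample distributions under $\mathrm{Poi}(n)$ (or $n$-fold product) sampling are statistically close, so that no estimator can succeed with probability $0.9$ on both. Throughout I would work with the multinomial model and reduce to Poisson sampling via \prettyref{cor:nstar-connection} (the models are equivalent up to constants in the undersampling regime). The two hypotheses are: $H_0$, the uniform urn on $k$ colors ($C=k$); and $H_1$, a near-uniform urn on $k-2\Delta$ colors, i.e.\ $k$ balls split as evenly as possible into $k-2\Delta$ groups, so group sizes are $\lfloor k/(k-2\Delta)\rfloor$ or $\lceil k/(k-2\Delta)\rceil$ and $|C_0-C_1| = 2\Delta$. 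Since the loss is zero-one with threshold $\Delta$, it suffices that these two cannot be distinguished with error probability below $0.1+0.1<1$, which by Le Cam's method follows once the total variation distance $\mathrm{TV}(P_0^{\otimes n},P_1^{\otimes n})$ (equivalently, the Poissonized versions) is bounded away from $1$, i.e.\ once the Hellinger distance or $\chi^2$-divergence between the single-draw (or Poissonized) laws is $O(1/n)$.

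For the first bound \prettyref{eq:uniform-lb1}, I would use the coarse estimate that distinguishing these two urns essentially requires \emph{seeing a collision pattern}: under either urn the first $O(\sqrt k)$ samples are, with constant probability, all distinct, and the induced distribution on "which colors and with what multiplicities" is nearly the same for $H_0$ and $H_1$ as long as no color is seen twice, because conditioned on all-distinct both urns look like sampling without replacement from an almost-uniform population. Quantitatively, I would bound $\mathrm{TV}$ by the probability that a collision occurs plus the TV between the two collision-free laws; a birthday-type computation shows the first term is $O(n^2/k)$ and the sizes being off by a $\Theta(\Delta/k)$ relative factor contributes $O(n\Delta/k)$, and more carefully one gets that $n = o((k-2\Delta)/\sqrt k)$ forces $\mathrm{TV}\to$ something $<1$. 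Actually the cleanest route is the $\chi^2$ route below, which yields \prettyref{eq:uniform-lb2} directly and \prettyref{eq:uniform-lb1} as a weaker consequence, so I would prove \prettyref{eq:uniform-lb2} first and derive \prettyref{eq:uniform-lb1} by noting $\arccosh(1+x)\gtrsim \sqrt{x}$ for all $x\ge 0$, whence the bound in \prettyref{eq:uniform-lb2} is at least $\Omega(k\sqrt{k/\Delta^2})=\Omega(k^{3/2}/\Delta)\ge\Omega((k-2\Delta)/\sqrt k)$ when $\Delta\le k/2$.

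For the main bound \prettyref{eq:uniform-lb2}, I would bound the Hellinger (or $\chi^2$) divergence between the Poissonized data under $H_0$ and $H_1$. Under Poisson sampling the histograms are independent across colors, so the divergence tensorizes: with $H_0$ having $k$ colors each sampled $\mathrm{Poi}(n/k)$ and $H_1$ having $k-2\Delta$ colors each sampled $\mathrm{Poi}(n/(k-2\Delta))$ (up to the $\pm1$ rounding, which I would absorb into constants), the squared Hellinger distance between the two product laws satisfies $H^2 \le \tfrac12\big(k\,H^2(\mathrm{Poi}(n/k),\mathrm{Poi}(n/(k-2\Delta))) + \text{(correction from differing color counts)}\big)$; more carefully one writes the per-color comparison and uses the closed form for Hellinger affinity between two Poissons, $\rho(\mathrm{Poi}(\lambda),\mathrm{Poi}(\mu)) = \exp(-\tfrac12(\sqrt\lambda-\sqrt\mu)^2)$. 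With $\lambda = n/k$, $\mu=n/(k-2\Delta)$ one gets $(\sqrt\lambda-\sqrt\mu)^2 = \tfrac{n}{k}\big(1-\sqrt{\tfrac{k}{k-2\Delta}}\big)^2 \asymp \tfrac{n}{k}\cdot\tfrac{\Delta^2}{k^2}$ when $\Delta = o(k)$, and the "extra" $2\Delta$ colors in $H_0$ (absent in $H_1$) each contribute a factor comparing $\mathrm{Poi}(n/k)$ to the point mass at $0$, i.e.\ $1-e^{-n/k}$ in affinity, costing $\Delta\cdot O(n/k)$ — which is the dominant term and gives the $k\log(1+k/\Delta^2)$ shape once we also account for the refinement where $H_1$'s support is placed to match more of $H_0$'s mass. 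To get the sharp $\arccosh(1+\tfrac{k}{4\Delta^2})$ I would instead not delete colors but \emph{move mass}: take $H_1$ to also have $k$ "slots" but with $2\Delta$ of the probabilities set to $0$ and redistributed; compute the $\chi^2$-divergence $\chi^2(P_1\|P_0)$ between the single-draw laws directly (it has a clean closed form for these near-uniform configurations), Poissonize to get $\chi^2$ of order $n\cdot\chi^2(P_1\|P_0)$, and invoke the sharp constant via the inequality relating $n^*$ to the point where this divergence hits a constant; the $\arccosh$ arises from optimizing the two-point construction (trading support-size gap against per-color perturbation size), exactly as in analogous support-size lower bounds. The main obstacle is the bookkeeping of the $\pm1$ rounding in the near-uniform $H_1$ and extracting the \emph{exact} constant $1+\tfrac{k}{4\Delta^2}$ inside the $\arccosh$ rather than just the two-regime asymptotics; I expect the asymptotic statement ($k\log(1+k/\Delta^2)$ for $\Delta\le\sqrt k$, $k^{3/2}/\Delta$ for $\Delta\ge\sqrt k$) to follow routinely from the Hellinger tensorization, while the unified $\arccosh$ form needs the more careful $\chi^2$ optimization sketched above.
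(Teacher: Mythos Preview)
Your proposal has a genuine gap: the two-point argument with a \emph{fixed} alternative $H_1$ cannot yield the bound \prettyref{eq:uniform-lb2}. If $H_1$ is a specific near-uniform urn supported on a fixed subset $S\subset[k]$ of size $k-2\Delta$, then the labeled sample distributions $P_0^{\otimes n}$ and $P_1^{\otimes n}$ are easy to distinguish: simply check whether any of the $2\Delta$ missing colors appears. Concretely, the Hellinger affinity between the single-draw laws is $\rho(P_0,P_1)=\sum_i\sqrt{P_0(i)P_1(i)}=\sqrt{(k-2\Delta)/k}$, so $\rho^n=(1-2\Delta/k)^{n/2}$ stays bounded away from zero only when $n\lesssim k/\Delta$. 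The $\chi^2$ route gives the same conclusion: $\chi^2(P_1\|P_0)\asymp\Delta/k$ (the dominant contribution comes from the $2\Delta$ zeroed coordinates), so $(1+\chi^2)^n-1=O(1)$ forces $n\lesssim k/\Delta$. Your tensorization over colors is correct for the Poissonized \emph{labeled} histograms, but it bounds the wrong quantity; the resulting lower bound $n^*\gtrsim k/\Delta$ is exponentially weaker than the target $k\log(1+k/\Delta^2)$ when $\Delta\le\sqrt{k}$.

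The missing idea is to put a \emph{uniform prior} over the labeling in $H_1$ and bound the $\chi^2$-divergence of the resulting \emph{mixture} against $Q_X=P_0^{\otimes n}$ via the Ingster second-moment identity
\[
\chi^2(P_X\|Q_X)+1=\Expect_{\theta,\theta'}\pth{\int\frac{P_\theta P_{\theta'}}{Q}}^{n},
\]
where $\theta,\theta'$ are independent draws of the random labeling. This is what the paper does: for $\Delta<k/4$ the inner integral equals $1+\frac{1}{k}(|J\cap J'|+|K\cap K'|-|J\cap K'|-|K\cap J'|)$ with $|J|=|K|=2\Delta$, and after dominating the hypergeometric overlaps by binomials (Hoeffding) and evaluating the binomial MGF at $\pm 4n/k$, the bound becomes $\exp\big(\frac{4\Delta^2}{k}(\cosh(4n/k)-1)\big)$. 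The $\arccosh$ in \prettyref{eq:uniform-lb2} is not the result of optimizing over constructions, as you suggest, but simply the inversion of this $\cosh$ term. The coarser bound \prettyref{eq:uniform-lb1} comes from the same computation without the refinement (using $e^x-1-x\le x^2/2$), and is needed separately for $k/4\le\Delta\le k/2$ where \prettyref{eq:uniform-lb2} does not apply; your reduction of \prettyref{eq:uniform-lb1} to \prettyref{eq:uniform-lb2} therefore also leaves that range uncovered.
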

\begin{proof}
    Consider the following two hypotheses:
    The null hypothesis $ H_0 $ is an urn consisting of $ k $ distinct colors;
    The alternative $ H_1 $ consists of $ k-2\Delta $ distinct colors, and each color appears either $ b_1\triangleq \Floor{\frac{k}{k-2\Delta}} $ or $ b_2\triangleq \Ceil{\frac{k}{k-2\Delta}} $ times.
    In terms of distributions, $ H_0 $ is the uniform distribution $ Q=(\frac{1}{k},\dots,\frac{1}{k}) $;
    $ H_1 $ is the closest perturbation from the uniform distribution: randomly pick disjoint sets of indices $ I,J\subseteq [k] $ with cardinality $ |I|=c_1 $ and $ |J|=c_2 $, where $ c_1 $ and $ c_2 $ satisfy
    \begin{align*}
      \text{(number of colors)}\quad & c_1+c_2=k-2\Delta, \\
      \text{(number of balls)}\quad & c_1b_1+c_2b_2=k.
    \end{align*}
    Conditional on $ \theta\triangleq (I,J) $, the distribution $ P_{\theta}=(p_{\theta,1},\dots,p_{\theta,k}) $ is given by
    \begin{equation*}
        p_\theta=
        \begin{cases}
            b_1/k,& i\in I, \\
            b_2/k,& i\in J.
        \end{cases}
    \end{equation*}
    Put the uniform prior on the alternative.
    Denote the marginal distributions of the $ n $ samples $ X=(X_1,\dots,X_n) $ under $ H_0 $ and $ H_1 $ by $ Q_{X} $ and $ P_{X} $, respectively.
    Since the distinct colors in $ H_0 $ and $ H_1 $ are separated by $ 2\Delta $, to show that the sample complexity $ n^*(k,\Delta)\ge n $, it suffices to show that no test can  distinguish $ H_0 $ and $ H_1 $ reliably using $ n $ samples.
    A further sufficient condition is a bounded $ \chi^2 $ divergence \cite{Tsybakov09}
    \begin{equation*}
        \chi^2(P_{X}\|Q_{X})\triangleq\int \frac{P_{X}^2}{Q_{X}}-1\le O(1).
    \end{equation*}
    The remainder of this proof is devoted to upper bounds of the $ \chi^2 $ divergence.
    
    Since $ P_{X|\theta}=P_{\theta}^{\otimes n} $ and $ Q_{X}=Q^{\otimes n} $, we have
    \begin{align*}
      \chi^2(P_{X}\|Q_{X})+1
      &=\int \frac{P_{X}^2}{Q_{X}}
      =\int \frac{(\Expect_\theta P_{X|\theta}) (\Expect_{\theta'} P_{X|\theta'}) }{Q_{X}}\\
      &=\Expect_{\theta,\theta'}\int \frac{P_{X|\theta} P_{X|\theta'}}{Q_{X}}
      =\Expect_{\theta,\theta'}\pth{\int \frac{P_{\theta} P_{\theta'}}{Q}}^n,
    \end{align*}
    where $ \theta' $ is an independent copy of $ \theta $.
    By the definition of $ P_{\theta} $ and $ Q $,
    \begin{equation}
        \int \frac{P_{\theta} P_{\theta'}}{Q}
        = \frac{b_1^2}{k}|I\cap I'|+\frac{b_2^2}{k}|J\cap J'|+\frac{b_1b_2}{k}(|I\cap J'|+|J\cap I'|)
        = 1+\sum_{i=1}^{4}A_i,
        \label{eq:chi2-ref4}
    \end{equation}
    where $ A_1\triangleq \frac{b_1^2}{k}(|I\cap I'|-\frac{c_1^2}{k}) $, $ A_2\triangleq \frac{b_2^2}{k}(|J\cap J'|-\frac{c_2^2}{k}) $, $ A_3=\frac{b_1b_2}{k}(|I\cap J'|-\frac{c_1c_2}{k}) $, and $ A_4=\frac{b_1b_2}{k}(|J\cap I'|-\frac{c_1c_2}{k}) $ are centered random variables.
    Applying $ 1+x\le e^x $ and Cauchy-Schwarz inequality, we obtain
    \begin{equation}
        \chi^2(P_{X}\|Q_{X})+1
        \le \Expect[e^{n\sum_{i=1}^{4}A_i}]
        \le \prod_{i=1}^4(\Expect[e^{4nA_i}])^{\frac{1}{4}}.
        \label{eq:chi2-ref1}
    \end{equation}
    Consider the first term $ \Expect[e^{4nA_1}] $.
    Note that $ |I\cap I'|\sim \Hypergeo(k,c_1,c_1) $, which is the distribution of the sum of $ c_1 $ samples drawn without replacement from a population of size $ k $ which consists of $ c_1 $ ones and $ k-c_1 $ zeros.
    By the convex stochastic dominance of the binomial over the hypergeometric distribution \cite[Theorem 4]{Hoeffding63}, for $ Y\sim\Binom(c_1,\frac{c_1}{k}) $, we have
    \begin{align}
      (\Expect[e^{4nA_1}])^{\frac{1}{4}}
      & \le \pth{\expect{\exp\pth{\frac{4nb_1^2}{k}(Y-c_1^2/k)}}}^{\frac{1}{4}}\nonumber\\
      & \le \exp\pth{\frac{c_1^2}{4k}\pth{\exp\pth{\frac{4nb_1^2}{k}}-1-\frac{4nb_1^2}{k}}}\nonumber\\
      & \le \exp\pth{\frac{c_1^2}{4k}\pth{\exp\pth{\frac{4nb_2^2}{k}}-1-\frac{4nb_2^2}{k}}},\label{eq:chi2-ref2}
    \end{align}
    where the last inequality follows from the fact that $ x\mapsto e^x-1-x $ is increasing when $ x>0 $.
                Other terms in \prettyref{eq:chi2-ref1} are bounded analogously and we have
    \begin{align}
      \chi^2(P_{X}\|Q_{X})+1
      & \le \exp\pth{\frac{c_1^2+c_2^2+2c_1c_2}{4k}\pth{\exp\pth{\frac{4nb_2^2}{k}}-1-\frac{4nb_2^2}{k}}}\nonumber\\
      & = \exp\pth{\frac{(k-2\Delta)^2}{4k}\pth{\exp\pth{\frac{4n}{k}\ceil{\frac{k}{k-2\Delta}}^2}-1-\frac{4n}{k}\ceil{\frac{k}{k-2\Delta}}^2}}.\label{eq:chi2-ref3}
    \end{align}
    If $ k-2\Delta\ge \sqrt{k} $, the upper bound \prettyref{eq:chi2-ref3} implies that $ n^*(k,\Delta)\ge \Omega(\frac{k-2\Delta}{\sqrt{k}}) $ since the $\chi^2$-divergence is finite with $O(\frac{k-2\Delta}{\sqrt{k}})$ samples, using the inequality that $e^x-1-x\le \frac{x^2}{2}$ for $x\ge 0$;
    if $ k-2\Delta\le \sqrt{k} $, the lower bound is trivial since $ \frac{k-2\Delta}{\sqrt{k}}\le 1 $.

    Now we prove the refined estimate \prettyref{eq:uniform-lb2} for $ 1\le \Delta < k/4 $, in which case $ |I|=c_1=k-4\Delta, |J|=c_2=2\Delta $ and $ b_1=1,b_2=2 $.
When $ c_1 $ is close to $ k $, $\Hypergeo(k,c_1,c_1) $ is no longer well approximated by $ \Binom(c_1,\frac{c_1}{k}) $,
and the upper bound in \prettyref{eq:chi2-ref2} yields a loose lower bound for the sample complexity.
    To fix this, note that in this case the set $ K\triangleq (I\cup J)^c $ has small cardinality $ |K|=2\Delta $.
    The equality in \prettyref{eq:chi2-ref4} can be equivalently represented in terms of $ J,J' $ and $ K,K' $ by
    \begin{equation*}
        \int \frac{P_{\theta} P_{\theta'}}{Q}
        = 1+\frac{|J\cap J'|+|K\cap K'|-|J\cap K'|-|K\cap J'|}{k}.
    \end{equation*}
    By upper bounds analogous to \prettyref{eq:chi2-ref1} -- \prettyref{eq:chi2-ref3}, $ \chi^2(P_{X}\|Q_{X})+1\le \prod_{i=1}^4(\Expect[e^{4nB_i}])^{\frac{1}{4}} $, where $ B_1\triangleq \frac{1}{k}(|J \cap J'|-\frac{(2\Delta)^2}{k}) $, $ B_2\triangleq \frac{1}{k}(|K \cap K'|-\frac{(2\Delta)^2}{k}) $, $ B_3\triangleq -\frac{1}{k}(|J \cap K'|-\frac{(2\Delta)^2}{k}) $, and $ B_4\triangleq -\frac{1}{k}(|K \cap J'|-\frac{(2\Delta)^2}{k}) $.
    Note that $|J \cap J'|,|K \cap K'|,|J \cap K'|,|K \cap J'|$ are all distributed as $\Hypergeo(k,2\Delta,2\Delta)$, which is dominated by $\Binom(2\Delta,\frac{2\Delta}{k})$.
    For $ Y\sim\Binom(2\Delta,\frac{2\Delta}{k}) $, we have
    \begin{equation*}
        (\Expect[e^{4nB_i}])^{\frac{1}{4}}
        \le \pth{\expect{\exp\pth{t\pth{Y-\frac{(2\Delta)^2}{k}}}}}^{1/4}
        \le \exp\pth{\frac{(2\Delta)^2}{4k}\pth{e^t-1-t}}.
    \end{equation*}
    with $ t =\frac{4n}{k} $ for $ i=1,2 $ and $ t =-\frac{4n}{k} $ for $ i=3,4 $.
    Therefore,
    \begin{align}
        \chi^2(P_{X}\|Q_{X})+1
        &\le \exp\pth{\frac{\Delta^2}{k}\pth{2e^{4n/k}+2e^{-4n/k}-4}}\nonumber\\
        &= \exp\pth{\frac{4\Delta^2}{k}(\cosh(4n/k)-1)}.
        \label{eq:chi2-ref5}
    \end{align}
    The upper bound \prettyref{eq:chi2-ref5} yields the sample complexity $ n^*(k,\Delta)\ge \Omega(k\arccosh(1+\frac{k}{4\Delta^2})) $.
\end{proof}

Now we establish another lower bound for the sample complexity of the \DistinctElements problem for sampling without replacement.
Since we can simulate sampling with replacement from samples obtained without replacement (see \prettyref{eq:hyper-multi} for details), it is also a valid lower bound for $ n^*(k,\Delta) $ defined in \prettyref{def:sample}.
On the other hand, as observed in \cite[Lemma 3.3]{RRSS09} (see also \cite[Lemma 5.14]{GV-thesis}), any estimator $ \hat{C} $ for the \DistinctElements problem with sampling without replacement leads to an estimator for the \SupportSize problem with slightly worse performance:
Suppose we have $ n $ \iid~samples drawn from a distribution $ P $ whose minimum non-zero probability is at least $ 1/\ell $.
Let $ \Cplug $ denote the number of distinct elements in these samples.
Equivalently, these samples can be viewed as being generated in two steps:
first, we draw $ k $ \iid~samples from $ P $, whose realizations form an instance of a $ k $-ball urn with $ \Cplug $ distinct colors;
next, we draw $ n $ samples from this urn without replacement ($ n \leq k $), which clearly are distributed according to $ P^{\otimes n} $.
Suppose $ \Cplug $ is close to the actual support size of $ P $.
Then applying any algorithm for the \DistinctElements problem to these $ n $ \iid~samples constitutes a good support size estimator. 
\prettyref{lmm:w-wo-replacement} formalizes this intuition.
\begin{lemma}
    \label{lmm:w-wo-replacement}
    Suppose an estimator $ \hat{C} $ takes $ n $ samples from a $ k $-ball urn $ (n\le k) $ without replacement and provides an estimation error of less than $ \Delta $ with probability at least $ 1-\delta $.
    Applying $ \hat{C} $ with $ n $ \iid~samples from any distribution $ P $ with minimum non-zero mass $ 1/\ell $ and support size $ S(P) $, we have
    \begin{equation*}
        |\hat{C}-S(P)|\le 2\Delta
    \end{equation*}
    with probability at least $ 1-\delta-\binom{\ell}{\Delta}\pth{1-\frac{\Delta}{\ell}}^k $.
\end{lemma}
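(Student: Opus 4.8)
The plan is to realize the $n$ \iid~samples from $P$ as a uniform sub-sample of a larger \iid~sample, so that the postulated urn estimator can be invoked on a \emph{random} urn whose number of colors is close to $S(P)$. First I would draw $k$ \iid~samples $Y_1,\dots,Y_k$ from $P$ and view them as the $k$ balls of an urn, each ball colored by its value; let $C'$ denote the (random) number of distinct colors among $Y_1,\dots,Y_k$. Drawing $n$ balls from this urn uniformly without replacement and outputting their colors yields, by the exchangeability of $Y_1,\dots,Y_k$ together with their independence from the random sub-sampling, a sample distributed exactly as $P^{\otimes n}$. Thus running $\hat C$ on these $n$ balls has the same law as running $\hat C$ on $n$ genuine \iid~samples from $P$, and applying the hypothesis on $\hat C$ conditionally on the realized urn (whose color count $C'$ is an arbitrary integer between $1$ and $k$) and then averaging gives $\Prob[\,|\hat C-C'|\ge \Delta\,]\le \delta$.

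Next I would bound $|C'-S(P)|$. Since every $Y_j$ lies in the support of $P$, we always have $C'\le S(P)$, so the only harmful event is $\{S(P)-C'\ge \Delta\}$, which is precisely the event that at least $\Delta$ atoms of $P$ are absent from $Y_1,\dots,Y_k$, equivalently that some $\Delta$-element set $T$ of atoms is entirely missed. As every nonzero mass of $P$ is at least $1/\ell$, we have $S(P)\le \ell$, hence at most $\binom{\ell}{\Delta}$ candidate sets $T$; and for each such $T$, writing $P(T)=\sum_{i\in T}p_i\ge \Delta/\ell$, the probability that all atoms of $T$ are missed in $k$ \iid~draws equals $(1-P(T))^k\le (1-\Delta/\ell)^k$. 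A union bound gives $\Prob[\,S(P)-C'\ge \Delta\,]\le \binom{\ell}{\Delta}(1-\Delta/\ell)^k$.

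Finally I would combine the two estimates with a union bound and the triangle inequality: off an event of probability at most $\delta+\binom{\ell}{\Delta}(1-\Delta/\ell)^k$, both $|\hat C-C'|<\Delta$ and $|C'-S(P)|<\Delta$ hold simultaneously, whence $|\hat C-S(P)|\le |\hat C-C'|+|C'-S(P)|<2\Delta$, which is the claim. I do not expect a serious obstacle here; the one point that warrants care is the distributional identity in the first step — one must verify that the ``draw $k$, then uniformly subsample $n$'' construction reproduces $P^{\otimes n}$ exactly, and that the worst-case guarantee on $\hat C$ may legitimately be applied after conditioning on the random urn configuration and only afterwards integrated out.
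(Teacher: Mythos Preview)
Your proposal is correct and follows essentially the same argument as the paper: realize $P^{\otimes n}$ as a without-replacement subsample of $k$ \iid\ draws from $P$, apply the urn guarantee conditionally on that random urn, and control $|C'-S(P)|$ via the same union bound over $\Delta$-element subsets of the support. Your write-up is in fact slightly more careful than the paper's on two points---explicitly invoking exchangeability for the distributional identity, and noting that $C'\le S(P)$ so only the one-sided deviation needs bounding.
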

\begin{proof}
    Suppose that we take $ k $ \iid~samples from $ P=(p_1,p_2,\dots) $, which form a $ k $-ball urn consisting of $ C $ distinct colors.
    By the union bound,
    \begin{equation*}
        \Prob[|C-S(P)|\ge \Delta]
        \le \sum_{\substack{I:|I|=\Delta,\\p_i\ge \frac{1}{\ell}, i\in I}}\pth{1-\sum_{i\in I}p_i}^k
        \le \binom{\ell}{\Delta}\pth{1-\frac{\Delta}{\ell}}^k.
    \end{equation*}
    Next we take $ n $ samples without replacement from this urn and apply the given estimator $ \hat{C} $.
    By assumption, conditioned on any realization of the $ k $-ball urn, $ |\hat{C}-C|\le \Delta $ with probability at least $ 1-\delta $.
    Then $ |\hat{C}-S(P)|\le 2\Delta $ with probability at least $ 1-\delta-\binom{\ell}{\Delta}\pth{1-\frac{\Delta}{\ell}}^k $.
    Marginally, these $ n $ samples are identically distributed as $ n $ \iid~samples from $ P $.
\end{proof}

Combining with the sample complexity of the \SupportSize problem in \prettyref{eq:sample-supp}, \prettyref{lmm:w-wo-replacement} leads to the following lower bound for the \DistinctElements problem:
\begin{theorem}
    \label{thm:nstar-support}
    Fix a sufficiently small constant $ c $. For any $ 1\le\Delta\le ck  $,
    \begin{equation*}
        n^*(k,\Delta)\ge \Omega\pth{\frac{k}{\log k}\log\frac{k}{\Delta}}.
    \end{equation*}
    The same lower bound holds for sampling without replacement.
\end{theorem}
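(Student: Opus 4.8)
The plan is to reduce to the \SupportSize problem via \prettyref{lmm:w-wo-replacement} and feed in the known optimal support-size sample complexity \prettyref{eq:sample-supp}, the crux being a judicious choice of the truncation level $\ell$. I would first establish the bound for sampling without replacement. Fix any estimator $\hat C$ that, using $n$ samples drawn without replacement from an arbitrary $k$-ball urn, has estimation error less than $\Delta$ with probability at least $0.9$; the goal is to show $n\gtrsim\frac{k}{\log k}\log\frac{k}{\Delta}$. Apply \prettyref{lmm:w-wo-replacement} with $\ell\triangleq\lceil c'k/\log\frac{k}{\Delta}\rceil$ for a small absolute constant $c'$ to be fixed; taking $c$ in $\Delta\le ck$ small enough relative to $c'$ ensures $\ell\le k$ and $2\Delta\le\ell$, so the reduction is meaningful. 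The lemma then shows $\hat C$, run on $n$ \iid\ samples from any distribution $P$ whose minimum nonzero mass is at least $1/\ell$, estimates the support size $S(P)$ within $2\Delta$ with probability at least $0.9-\binom{\ell}{\Delta}(1-\Delta/\ell)^{k}$.

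Next I would check that this correction is negligible. Using $\binom{\ell}{\Delta}\le(e\ell/\Delta)^{\Delta}$ and $1-x\le e^{-x}$,
\begin{equation*}
\binom{\ell}{\Delta}\pth{1-\frac{\Delta}{\ell}}^{k}\le\exp\pth{\Delta\log\frac{e\ell}{\Delta}-\frac{k\Delta}{\ell}}.
\end{equation*}
With $\ell=\Theta\pth{k/\log\frac{k}{\Delta}}$ the subtracted term $k\Delta/\ell\asymp\frac{1}{c'}\Delta\log\frac{k}{\Delta}$ dominates $\Delta\log\frac{e\ell}{\Delta}=O\pth{\Delta\log\frac{k}{\Delta}}$ once $c'$ is small, so the right-hand side is at most $\exp\pth{-\Omega\pth{\Delta\log\frac{k}{\Delta}}}$, which is at most any prescribed constant (say $0.1$) once $c$ is small, since $\Delta\ge1$ and $\log\frac{k}{\Delta}\ge\log\frac1c$. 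Thus $\hat C$ serves as a \SupportSize estimator at level $\ell$ with error $2\Delta$ and success probability at least $0.8$; after a constant-size median boost (or because the lower bound behind \prettyref{eq:sample-supp} is valid at any constant confidence), \prettyref{eq:sample-supp} yields $n\gtrsim\frac{\ell}{\log\ell}\log^{2}\frac{\ell}{\Delta}$.

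Finally I would substitute $\ell=\Theta\pth{k/\log\frac{k}{\Delta}}$. Because $\Delta\le ck$ keeps $\log\frac{k}{\Delta}$ between an absolute positive constant and $\log k$, one gets $\log\ell=\log k-\log\log\frac{k}{\Delta}+O(1)=\Theta(\log k)$ and $\log\frac{\ell}{\Delta}=\log\frac{k}{\Delta}-\log\log\frac{k}{\Delta}+O(1)=\Theta\pth{\log\frac{k}{\Delta}}$, whence
\begin{equation*}
n\gtrsim\frac{\ell}{\log\ell}\log^{2}\frac{\ell}{\Delta}\asymp\frac{k/\log(k/\Delta)}{\log k}\log^{2}\frac{k}{\Delta}=\frac{k}{\log k}\log\frac{k}{\Delta},
\end{equation*}
which is the claimed bound for sampling without replacement. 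The statement for $n^*(k,\Delta)$ follows immediately: since $n$ samples without replacement can simulate $n$ samples with replacement (the reduction around \prettyref{eq:hyper-multi}), every estimator in the sense of \prettyref{def:sample} using $n$ with-replacement samples induces one using $n$ without-replacement samples, so $n^*(k,\Delta)$ is no smaller than the without-replacement sample complexity.

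I expect the main obstacle to be the constant bookkeeping around $\ell$: it must be small enough for $\binom{\ell}{\Delta}(1-\Delta/\ell)^{k}$ to be negligible, yet as large as $\Theta\pth{k/\log\frac{k}{\Delta}}$ so that substituting into the $\log^{2}$ of \prettyref{eq:sample-supp} costs only one power of $\log\frac{k}{\Delta}$ rather than two. This matching of the two effects is precisely what converts the quadratic-log \SupportSize complexity into the single-log bound here, and it is what forces the constant $c$ in $\Delta\le ck$ to be sufficiently small.
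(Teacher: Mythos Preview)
Your proposal is correct and follows essentially the same route as the paper: invoke \prettyref{lmm:w-wo-replacement} with $\ell\asymp k/\log(k/\Delta)$, verify that $\binom{\ell}{\Delta}(1-\Delta/\ell)^k$ is a negligible constant, feed the resulting support-size estimator into the lower bound \prettyref{eq:sample-supp}, and simplify. Your write-up actually spells out more of the constant bookkeeping (the bound on the binomial term and the $\log\ell,\log(\ell/\Delta)$ asymptotics) than the paper does, and your final paragraph correctly identifies why the particular scale of $\ell$ is forced.
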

\begin{proof}
    By the lower bound of the support size estimation problem obtained in \cite[Theorem 2]{WY15}, if $ n\le \frac{\alpha \ell}{\log \ell}\log^2\frac{\ell}{2\Delta} $ and $ 2\Delta\le c_0\ell $ for some fixed constants $ c_0<\frac{1}{2} $ and $ \alpha $, then for any $ \hat{C} $, there exists a distribution $ P $ with minimum non-zero mass $ 1/\ell $ such that $ |\hat{C}-S(P)|\le 2\Delta $ with probability at most $ 0.8 $.
    Applying \prettyref{lmm:w-wo-replacement} yields that, using $ n $ samples without replacement, no estimator can provide an estimation error of $ \Delta $ with probability $ 0.9 $ for an arbitrary $ k $-ball urn, provided $ \binom{\ell}{\Delta}\pth{1-\frac{\Delta}{\ell}}^k\le 0.1 $.
    Consequently, as long as $ 2\Delta\le c_0\ell $ and $ \binom{\ell}{\Delta}\pth{1-\frac{\Delta}{\ell}}^k\le 0.1 $, we have
    \begin{equation*}
        n^*(k,\Delta)\ge \frac{\alpha \ell}{\log \ell}\log^2\frac{\ell}{2\Delta}.
    \end{equation*}
    The desired lower bound follows from choosing $ \ell\asymp \frac{k}{\log(k/\Delta)} $.
\end{proof}



\section{Proof of results in \prettyref{tab:main}}
\label{sec:table}
Below we explain how the sample complexity bounds summarized in \prettyref{tab:main} are obtained from various results in \prettyref{sec:ub} and \prettyref{sec:lb}:  
\begin{itemize}
        \item The upper bounds are obtained from the worst-case MSE in \prettyref{sec:ub} and the Markov inequality.
In particular, the case of $ \Delta\le \sqrt{k}(\log k)^{-\delta} $ follows from the second and the third upper bounds of \prettyref{thm:hatC-interpolation};
the case of $ \sqrt{k}\le \Delta \le k^{0.5+\delta} $ follows from the first upper bound of \prettyref{thm:hatC-interpolation};
the case of $ k^{1-\delta}\le \Delta\le ck $ follows from \prettyref{thm:hatC-lse}.
By monotonicity, we have the $ O(k\log\log k) $ upper bound when $ \sqrt{k}(\log k)^{-\delta}\le \Delta\le \sqrt{k} $, the $ O(\frac{k}{\log k}) $ upper bound when $ \Delta\ge ck $, and the $ O(k) $ upper bound when $ k^{0.5+\delta}\le \Delta \le k^{1-\delta} $.

\item The lower bound for $ \Delta\le \sqrt{k} $ follows from \prettyref{thm:main-lb-uniform};
the lower bound for $ k^{0.5+\delta}\le \Delta\le ck $ follows from \prettyref{thm:nstar-support}.
These further implies the $ \Omega(k) $ lower bound for $ \sqrt{k}\le \Delta\le k^{0.5+\delta} $ by monotonicity.

\end{itemize}

\appendix
\section{Connections between various sampling models}
\label{app:model}
As mentioned in \prettyref{sec:related}, four popular sampling models have been introduced in the statistics literature: the multinomial model, the hypergeometric model, the Bernoulli model, and the Poisson model.
The connections between those models are explained in details in this section, as well as relations between the respective sample complexities.

\begin{figure}[h]
    \centering
    \begin{tikzpicture}[
            trans/.style={thick,->,shorten >=2pt,shorten <=2pt,>=stealth},
            ]
        \filldraw (0,2) circle (0pt) node[above] (B) {\bf Bernoulli model};
        \filldraw (0,0) circle (0pt) node[below] (H) {\bf hypergeometric model};
        \filldraw (6,0) circle (0pt) node[below] (M) {\bf multinomial model};
        \filldraw (6,2) circle (0pt) node[above] (P) {\bf Poisson model};
        \draw[trans] (H) -- (B) node[midway,left] {$ \Binom(k,p) $ samples};
        \draw[trans] (H) -- (M) node[midway,below] {simulate};
        \draw[trans] (M) -- (P) node[midway,right] {$ \Poi(n) $ samples};
    \end{tikzpicture}
    \caption{Relations between the four sampling models.
                In particular, hypergeometric (resp.~multinomial) model reduces to the 
                Bernoulli (resp.~Poisson) model when the sample size is binomial (resp.~Poisson) distributed.
                \label{fig:model-connection}}
\end{figure}
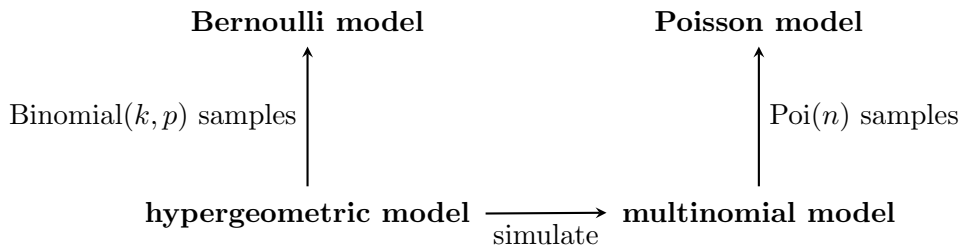

The connections between different models are illustrated in \prettyref{fig:model-connection}.
Under the Poisson model, the sample size is a Poisson random variable; conditioned on the sample size, the samples are \iid which is identical to the multinomial model. 
The same relation holds as the Bernoulli model to the hypergeometric model. 
Given samples $ (Y_1,\dots,Y_n) $ uniformly drawn from a $ k $-ball urn without replacement (hypergeometric model), we can simulate $ (X_1,\dots,X_n) $ drawn with replacement (multinomial model) as follows: for each $ i=1,\dots,n $, let
\begin{equation}
    X_i=\begin{cases}
        Y_i, & \text{with probability}~1-\frac{i-1}{k},\\
        Y_m, & \text{with probability}~\frac{i-1}{k}, \quad m \sim \text{Uniform}([i-1]).
    \end{cases}
    \label{eq:hyper-multi}
\end{equation}

In view of the connections in \prettyref{fig:model-connection}, any estimator constructed for one specific model can be adapted to another.
The adaptation from multinomial to hypergeometric model is provided by the simulation in \prettyref{eq:hyper-multi}, and the other direction is given by \prettyref{lmm:w-wo-replacement} (without modifying the estimator).
The following result provides a recipe for going between fixed and randomized sample size:
\begin{lemma}
    \label{lmm:randomized}
    Let $ N $ be an $\naturals$-valued  random variable.
    \begin{enumerate}[(a)]
        \item Given any\footnote{More precisely, here and below $\hat C$ is understood as a sequence of estimators indexed by the sample size $(X_1,\ldots,X_n)\mapsto \hat C(X_1,\ldots,X_n)$.} $ \hat{C} $ that uses $ n $ samples and succeeds with probability at least $ 1-\delta $, there exists $ \hat{C}' $ using $ N $ samples that succeeds with probability at least $ 1-\delta-\Prob[N<n] $.
        \item Given any $ \tilde{C} $ using $ N $ samples that succeeds with probability at least $ 1-\delta $, there exists $ \tilde{C}' $ that uses $ n $ samples and  succeeds with probability at least $ 1-\delta-\Prob[N>n] $.
    \end{enumerate}
\end{lemma}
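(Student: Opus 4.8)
The plan is to prove both directions by a single simple device: feed the given estimator a prefix of whatever sample is available, padding or truncating as needed, and charge the mismatch between $N$ and $n$ to the stated error terms, using that $N$ (or its surrogate) is independent of the data.

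\textbf{Part (a): fixed to random sample size.} I would construct $\hat C'$ as follows. Upon observing $X_1,\ldots,X_N$, if $N\ge n$ output $\hat C(X_1,\ldots,X_n)$, discarding the surplus; if $N<n$ output an arbitrary value (say $0$), which is counted as a failure. The key structural fact is that, since $N$ is independent of the \iid sequence $(X_i)$, conditioned on $\{N=m\}$ with $m\ge n$ the first $n$ coordinates $(X_1,\ldots,X_n)$ are still $n$ genuine \iid draws from the urn, so $\hat C$ succeeds with probability at least $1-\delta$ on that event. Conditioning on $N$ and summing then yields
\[
\Prob[\hat C'\text{ fails}] \;\le\; \Prob[N<n]\cdot 1 \;+\; \Prob[N\ge n]\cdot\delta \;\le\; \delta + \Prob[N<n].
\]

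\textbf{Part (b): random to fixed sample size.} Here I would allow $\tilde C'$ to use external randomness: draw a fresh $N'$ with the same law as $N$, independent of everything. Upon observing $X_1,\ldots,X_n$: if $N'\le n$, output $\tilde C(X_1,\ldots,X_{N'})$; if $N'>n$, output an arbitrary value, again counted as a failure. Because $N'$ is an independent copy of $N$ and the $X_i$ are \iid, the joint law of $(N',X_1,\ldots,X_{N'})$ coincides with the joint law of $(N,X_1,\ldots,X_N)$ under which $\tilde C$ was analyzed, so $\Prob[\tilde C(X_1,\ldots,X_{N'})\text{ fails},\,N'\le n]\le\Prob[\tilde C\text{ fails}]\le\delta$. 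Adding the event $\{N'>n\}$ gives
\[
\Prob[\tilde C'\text{ fails}] \;\le\; \delta + \Prob[N'>n] \;=\; \delta + \Prob[N>n].
\]

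\textbf{Expected obstacle.} There is no genuine mathematical difficulty here; the argument is entirely bookkeeping of the independence structure, so the ``hard part'' is only stating things precisely. The one point I would be careful about is that a length-$m$ prefix of $N$ \iid samples is itself a block of $m$ \iid samples with exactly the right joint distribution—once we condition on the sample size in part (a) and once we reproduce it by an independent copy in part (b)—so that the guarantee of the given estimator transfers verbatim on the good event $\{N\ge n\}$ (resp.\ $\{N'\le n\}$). I would also flag explicitly that the $\tilde C'$ built in part (b) is randomized, which is harmless within the framework of the paper since the estimators in \prettyref{def:sample} are not required to be deterministic.
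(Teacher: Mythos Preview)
Your proposal is correct and matches the paper's proof essentially line for line: the same truncate-or-abort construction in part (a), and in part (b) the paper also draws an independent copy of $N$ (called $m$ there) and applies $\tilde C$ to the first $m$ samples, bounding the failure probability by the same sum-over-$j$ argument you sketch via the coupling of joint laws. The only cosmetic difference is that the paper writes out the conditional-probability sum explicitly rather than invoking equality of joint distributions.
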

\begin{proof}
    \begin{enumerate}[(a)]
        \item 
        Denote the samples by $ X_1,\dots,X_N $.
        Following \cite[Lemma 5.3(a)]{RRSS09},  define $ \hat{C}' $ as
        \begin{equation*}
            \hat{C}'=
            \begin{cases}
                \hat{C}(X_1,\dots,X_n), & N\ge n,\\
                0, & N<n.
            \end{cases}
        \end{equation*}
        Then $ \hat{C}' $ succeeds as long as $ N\ge n $ and $ \hat{C} $ succeeds, which has probability at least $ 1-\delta-\Prob[N<n] $.

        \item Denote the samples by $ X_1,\dots,X_{n} $.
        Draw a random variable $ m $ from the distribution of $ N $ and define $ \tilde{C}' $ as 
        \begin{equation*}
            \tilde{C}'=
            \begin{cases}
                \tilde{C}(X_1,\dots,X_m), & m\le n,\\
                0, & m>n.
            \end{cases}
        \end{equation*}
        The given estimator $ \tilde{C} $ fails with probability $ \sum_{j\ge 0}\Prob[\tilde{C}~\text{fails}|N=j]\Prob[N=j]\le \delta $.
        Consequently, $ \sum_{j=0}^{n}\Prob[\tilde{C}~\text{fails}|N=j]\Prob[N=j]\le \delta $.
        The estimator $ \tilde{C}' $ fails with probability at most
        \begin{equation*}
            \sum_{j=0}^{n}\Prob[\tilde{C}~\text{fails}|m=j]\Prob[m=j]+\Prob[m>n]
            \le \delta+\Prob[m>n],
        \end{equation*}
        which completes the proof.
    \end{enumerate}
\end{proof}

The adaptations of estimators between different sampling models imply the relations of the fundamental limits on the corresponding sample complexities.
Extending \prettyref{def:sample}, let $ n^*_M(k,\Delta,\delta) $, $ n^*_H(k,\Delta,\delta) $, $ n^*_B(k,\Delta,\delta) $, and $ n^*_P(k,\Delta,\delta) $ be the minimum expected sample size under the multinomial, hypergeometric, Bernoulli, and Poisson sampling model, respectively, such that there exists an estimator $ \hat{C} $ satisfying $ \Prob[|\hat{C}-C|\ge \Delta]\le \delta $.
Combining Chernoff bounds (see, \eg, \cite[Theorem 4.4, 4.5, and 5.4]{MU06}), we obtain \prettyref{cor:nstar-connection}, in which the connection between multinomial and Poisson models gives a rigorous justification of the assumption on the Poisson sampling model in \prettyref{sec:ub}.
\begin{corollary}
    \label{cor:nstar-connection} The following relations hold:
    \begin{itemize}
        \item $ n^*_H $ versus $ n^*_M $:
        \begin{itemize}
            \item[(a)] $ n^*_H(k,\Delta,\delta)\le n^*_M(k,\Delta,\delta) $;
            \item[(b)] $ n^*_H(k,\Delta,\delta)\le n \Rightarrow n^*_M(k',2\Delta,\delta+\binom{k'}{\Delta}(1-\frac{\Delta}{k'})^k)\le n $, for any $k'\in\naturals$.
                                                In particular, if $\delta$ is a constant, then we can choose $k'=\Theta(k/\log\frac{k}{\Delta})$.
        \end{itemize}

        \item $ n^*_P $ versus $ n^*_M $:
        \begin{itemize}
            \item[(c)] $ n_P^*(k,\Delta,\delta)\le n \Rightarrow n_M^*(k,\Delta,\delta+(e/4)^n)\le 2n $;
            \item[(d)] $ n_M^*(k,\Delta,\delta)\le n \Rightarrow n_P^*(k,\Delta,\delta+(2/e)^n)\le 2n $.
        \end{itemize}

        \item $ n^*_B $ versus $ n^*_H $:
        \begin{itemize}
            \item[(e)] $ n_B^*(k,\Delta,\delta)\le n \Rightarrow n_H^*(k,\Delta,\delta+(e/4)^n)\le 2n $;
            \item[(f)] $ n_H^*(k,\Delta,\delta)\le n \Rightarrow n_B^*(k,\Delta,\delta+(2/e)^n)\le 2n $.
        \end{itemize}
    \end{itemize}
\end{corollary}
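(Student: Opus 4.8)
The plan is to obtain all six relations by combining three tools already established in this appendix and in \prettyref{sec:lb}: the hypergeometric-to-multinomial coupling \prettyref{eq:hyper-multi}, the without-replacement-to-support-size reduction \prettyref{lmm:w-wo-replacement}, and the fixed-versus-randomized sample-size recipe \prettyref{lmm:randomized}, together with standard Poisson and binomial Chernoff bounds.

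Part (a) is immediate: given $n$ samples drawn without replacement, \prettyref{eq:hyper-multi} produces a vector of $n$ samples distributed exactly as $n$ \iid~draws with replacement, so running a near-optimal multinomial estimator on this synthetic sample yields a hypergeometric estimator of the same sample size and success probability, whence $n^*_H(k,\Delta,\delta)\le n^*_M(k,\Delta,\delta)$. Part (b) is a restatement of \prettyref{lmm:w-wo-replacement} with $\ell=k'$: a $k'$-ball urn induces a distribution with minimum nonzero mass at least $1/k'$ and support size equal to its number of colors, so a without-replacement estimator with error $\Delta$ and confidence $1-\delta$ becomes, when fed $n$ \iid~samples, a multinomial estimator with error $2\Delta$ and confidence $1-\delta-\binom{k'}{\Delta}(1-\Delta/k')^k$. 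The ``in particular'' clause follows from the estimate $\binom{k'}{\Delta}(1-\Delta/k')^k\le\exp(\Delta(1+\log\tfrac{k'}{\Delta})-\tfrac{k\Delta}{k'})$, whose exponent is negative and bounded away from $0$ once $k'\asymp k/\log\tfrac{k}{\Delta}$.

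Parts (c)--(f) all follow one template: the model with randomized sample size carries a count $N$ whose mean I pick to be either $n$ or $2n$ so that exactly one of the tails $\Prob[N<n]$, $\Prob[N>n]$ survives and is exponentially small, after which \prettyref{lmm:randomized} interchanges $N$ with a fixed count. For (c), a Poisson estimator with mean $\lambda\le n$ has $\Prob[\Poi(\lambda)>2n]\le\Prob[\Poi(n)>2n]\le(e/4)^n$, so \prettyref{lmm:randomized}(b) with fixed count $2n$ gives the multinomial estimator. For (d), a fixed-$n$ multinomial estimator fed $\Poi(2n)$ samples has $\Prob[\Poi(2n)<n]\le(2/e)^n$, so \prettyref{lmm:randomized}(a) gives a Poisson estimator of expected sample size $2n$. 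Parts (e) and (f) are verbatim analogues with $\Binom(k,n/k)$ and $\Binom(k,2n/k)$ replacing the Poisson counts (using that, conditioned on its size, a Bernoulli sample is a uniform without-replacement sample) and the same two constants: $\Prob[\Binom(k,n/k)>2n]\le(e/4)^n$ and $\Prob[\Binom(k,2n/k)<n]\le(2/e)^n$, the latter from the relative-entropy Chernoff bound $\Prob[X\le\mu/2]\le(2/e)^{\mu/2}$ at $\mu=2n$.

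The only point requiring care --- and it is organizational rather than a genuine obstacle --- is the bookkeeping: pairing each of the six claims with the correct half of \prettyref{lmm:randomized} (equivalently, deciding whether $\Prob[N<n]$ or $\Prob[N>n]$ must be controlled), choosing the mean of $N$ so that the surviving tail is the small one while the expected sample size comes out as stated, and confirming that the Poisson/binomial Chernoff constants evaluate to exactly $(e/4)^n$ and $(2/e)^n$. No step needs anything beyond these elementary tail estimates.
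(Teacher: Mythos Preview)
Your proposal is correct and follows exactly the route the paper indicates: the paper does not give a detailed proof of this corollary, only stating that it follows by combining the simulation \prettyref{eq:hyper-multi}, \prettyref{lmm:w-wo-replacement}, \prettyref{lmm:randomized}, and standard Chernoff bounds, which is precisely what you carry out. Your identification of which half of \prettyref{lmm:randomized} to invoke in each of (c)--(f), and your verification of the constants $(e/4)^n$ and $(2/e)^n$ from the Poisson/binomial Chernoff bounds, are both accurate.
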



\section{Correlation decay between fingerprints}
\label{app:corr}
Recall that the fingerprints are defined by $    \Phi_j=\sum_i \indc{N_i=j}$,
where $ N_i $ denotes the histogram of samples.
Under the Poisson model, $ N_i\inddistr\Poi(np_i) $. Then
\begin{align*}
   \cov(\Phi_j,\Phi_{j'})&=-\sum_{i}\Prob[N_i=j]\Prob[N_i=j'],\quad j\ne j',\\
   \var[\Phi_j]&=\sum_{i}\Prob[N_i=j](1-\Prob[N_i=j]).
\end{align*}
The correlation coefficient between $ \Phi_0 $ and $ \Phi_j $ follows immediately:
\begin{align}
  |\rho(\Phi_0,\Phi_j)|
  & =\sum_i\frac{\Prob[N_i=0]\Prob[N_i=j]}{\sqrt{\sum_{l}\Prob[N_l=0](1-\Prob[N_l=0])\sum_{l}\Prob[N_l=j](1-\Prob[N_l=j])}}\nonumber\\
  & \le \sum_i\frac{\Prob[N_i=0]\Prob[N_i=j]}{\sqrt{\Prob[N_i=0](1-\Prob[N_i=0])\Prob[N_i=j](1-\Prob[N_i=j])}}\nonumber\\
  & = \sum_i\sqrt{\frac{\Prob[N_i=0]}{1-\Prob[N_i=0]} \frac{\Prob[N_i=j]}{1-\Prob[N_i=j]}}
  = \sum_i\sqrt{\frac{e^{-\lambda_i}}{1-e^{-\lambda_i}}\frac{\frac{e^{-\lambda_i}\lambda_i^j}{j!}}{1-\frac{e^{-\lambda_i}\lambda_i^j}{j!}}},\label{eq:correlation}
\end{align}
where $ \lambda_i=np_i $.
Note that $ \max_{x>0}\frac{e^{-x}x^j}{j!}=\frac{e^{-j}j^j}{j!}\rightarrow 0 $ as $ j\diverge $.
Therefore, for any $ x>0 $, 
\begin{equation}
    \frac{e^{-x}}{1-e^{-x}}\frac{\frac{e^{-x}x^j}{j!}}{1-\frac{e^{-x}x^j}{j!}}
    = \frac{1}{j!}\frac{e^{-2x}x^j}{1-e^{-x}}(1+o_j(1)),
    \label{eq:fx}
\end{equation}
where $ o_j(1) $ is uniform as $ j\diverge $.
Taking derivative, the function $ x\mapsto \frac{e^{-2x}x^j}{1-e^{-x}} $ on $ x>0 $ is increasing if and only if $ x+e^x(j-2x)-j>0 $, and the maximum is attained at $ x=j/2+o_j(1) $.
Therefore, applying $ j!>(j/e)^j $,
\begin{equation}
    \frac{1}{j!}\frac{e^{-2x}x^j}{1-e^{-x}}
    \le (1+o_j(1))2^{-j}.
    \label{eq:fx-ub}
\end{equation}
Combining \prettyref{eq:correlation} -- \prettyref{eq:fx-ub}, we conclude that
\begin{equation*}
    |\rho(\Phi_0,\Phi_j)|
    \le k2^{-j/2}(1+o_j(1)).
\end{equation*}

\section{Proof of auxiliary lemmas}
\label{app:aux}
\begin{proof}[Proof of \prettyref{lmm:tm-ub}]
    For any $ z\in\complex $, we can represent the forward difference in \prettyref{eq:tn-full} as an integral:
    \begin{align*}
      \Delta^mf(z)
      &= f(z+m)-\binom{m}{1}f(z+m-1)+\dots+(-1)^mf(z) \\
      &= \int_{[0,1]^m}f^{(m)}(z+x_1+\dots+x_m)\diff x_1 \cdots \diff x_m.
    \end{align*}
    Therefore,
    \begin{equation}
        |t_m(z)|
        = \abs{\frac{1}{m!}\Delta^mp_m(z)}
        \le \frac{1}{m!}\sup_{0\le \xi \le m}|p_m^{(m)}(z+\xi)|.
        \label{eq:tm-ub-ref1}
    \end{equation}
    Recall the definition of $ p_m $ in \prettyref{eq:def-pm}. Let $ p_m(z)=\sum_{l=0}^{2m}a_\ell z^\ell $.
    Let $ z(z-1)\cdots(z-m+1)=\sum_{i=0}^{m}b_iz^i $ and $ (z-M)(z-M-1)\cdots(z-M-m+1)=\sum_{i=0}^{m}c_iz^i $.
    Expanding the product and collecting the coefficients yields a simple upper bound:
    \begin{equation*}
        |b_i|\le 2^m(m-1)^{m-i},\qquad
        |c_i|\le 2^m(M+m-1)^{m-i}\le 2^{m}(2M)^{m-i}\le 2^{2m}M^{m-i}.
    \end{equation*}
    Since $ \sum_{\ell=0}^{2m}a_\ell z^\ell = (\sum_{i=0}^{m}b_iz^i)(\sum_{j=0}^{m}c_jz^j)$ , for $ \ell\ge m $,
    \begin{align*}
      |a_\ell |
      &=\abs{\sum_{i=\ell-m}^{m}b_ic_{\ell-i}}
      \le \sum_{i=\ell-m}^{m}{2^{3m}(m-1)^{m-i}M^{m-\ell+i}}\\
      &= 2^{3m}M^{2m-\ell}\sum_{i=\ell-m}^{m}\pth{\frac{m-1}{M}}^{m-i}
      \le m2^{3m}M^{2m-\ell}.
    \end{align*}
    Taking $ m $-th derivative of $ p_m $, we obtain
    \begin{align*}
      |p_m^{(m)}(z)|
      &=\abs{\sum_{j=0}^{m}a_{j+m}\frac{(j+m)!}{j!}z^j}\\
      &\le \sum_{j=0}^{m}|a_{j+m}M^j|\binom{m+j}{m}m!\abs{\frac{z}{M}}^j
        \le m2^{3m}M^{m}m!(2e)^m\sum_{j=0}^{m}\abs{\frac{z}{M}}^j\\
      & \le m^22^{6m}M^{m}m!\pth{\frac{|z|}{M}\vee 1}^m
        =m^22^{6m}m!\pth{|z|\vee M}^m.
    \end{align*}
    Then the desired \prettyref{eq:tm-ub} follows from \prettyref{eq:tm-ub-ref1}.
\end{proof}

\begin{proof}[Proof of \prettyref{lmm:stirling}]
    The following uniform asymptotic expansions of the Stirling numbers of the first kind was obtained in \cite[Theorem 2]{CRT00}:
    \begin{equation*}
        |s(n+1,m+1)|=
        \begin{cases}
            \frac{n!}{m!}(\log n+\gamma)^m(1+o(1)),& 1\le m\le \sqrt{\log n},\\
            \frac{\Gamma(n+1+R)}{\Gamma(R)R^{m+1}\sqrt{2\pi H}}(1+o(1)),& \sqrt{\log n}\le m\le n-n^{1/3},\\
            \binom{n+1}{m+1}(\frac{m+1}{2})^{n-m}(1+o(1)), & n-n^{1/3}\le m \le n,
        \end{cases}
    \end{equation*}
    where $ \gamma $ is Euler's constant, $ R $ is the unique positive solution to $ h'(x)=0 $ with $ h(x)\triangleq \log\frac{\Gamma(x+n+1)}{\Gamma(x+1)x^{m}} $, $ H=R^2h''(R) $, and all $ o(1) $ terms are uniform in $ m $.
    In the following we consider each range separately and prove the non-asymptotic approximation in \prettyref{eq:stirling}.

    Case I. For $ 1\le m\le \sqrt{\log n} $, Stirling's approximation gives
    \begin{equation*}
        \frac{n!}{m!}(\log n+\gamma)^m
        =n!\pth{\Theta\pth{\frac{\log n}{m}}}^m.
    \end{equation*}

    Case II. For $ n-n^{1/3}\le m \le n $,
    \begin{align*}
      \binom{n+1}{m+1}\pth{\frac{m+1}{2}}^{n-m}
      & = \frac{n!}{m!}\pth{\Theta\pth{\frac{m}{n-m}}}^{n-m}\\
      & = n!\exp\pth{m\pth{\frac{n-m}{m}\log\pth{\Theta\pth{\frac{m}{n-m}}}-\log \Theta(m)}}\\
      & = n!\pth{\Theta\pth{\frac{1}{m}}}^m.
    \end{align*}

    Case III. For $ \sqrt{\log n}\le m\le n-n^{1/3} $,
    note that $ h(x)=\sum_{i=1}^n\log(x+i)-m\log x $, and thus $ H=R^2h''(R)=m-\sum_{i=1}^{n}\frac{R^2}{(R+i)^2}\le m $.
    By \cite[Lemma 4.1]{MW58}, $ H=\omega(1) $ in this range.
    Hence,
    \begin{equation}
        |s(n+1,m+1)|=\frac{\Gamma(n+1+R)}{\Gamma(R)R^{m+1}}(\Theta(1))^m= \frac{n!}{R^m}\frac{\Gamma(n+1+R)}{n!\Gamma(R+1)}(\Theta(1))^m,
        \label{eq:stirling-ref2}
    \end{equation}
    where $ R $ is the solution to  $ x(\frac{1}{x+1}+\dots+\frac{1}{x+n})=m $.
    Bounding the sum by integrals, we have
    \begin{equation*}
        R\log\pth{1+\frac{n}{R+1}}\le m \le R\log\pth{1+\frac{n}{R}}.
    \end{equation*}
    If $ \sqrt{\log n}\le m\le \frac{n}{e} $, then $ R\asymp \frac{m}{\log(n/m)} $, and hence
    \begin{equation*}
        1\le \frac{\Gamma(n+1+R)}{n!\Gamma(R+1)}
        \le \pth{O\pth{\frac{n+R}{R}}}^R
        = \exp(O(m)).
    \end{equation*}
    In view of \prettyref{eq:stirling-ref2}, we have $ |s(n+1,m+1)|=\frac{n!}{(\Theta(R))^m} $, which is exactly \prettyref{eq:stirling} when $ m\le n/e $.
    If $ n/e \le m \le n-n^{1/3} $, then $ R\asymp \frac{n^2}{n-m} $, and
    \begin{align*}
      \frac{1}{R^m}\frac{\Gamma(n+1+R)}{n!\Gamma(R+1)}
      & =R^{-m} \pth{\Theta\pth{\frac{n+R}{n}}}^n\\
      & =\exp\pth{-m\log\Theta\pth{\frac{n^2}{n-m}}+n\log\Theta\pth{\frac{n}{n-m}}} \\
      & =\exp\pth{-m\log \Theta(n)+(n-m)\log\Theta\pth{\frac{n}{n-m}}}\\
      & =\exp\pth{-m\log \Theta(n)}.
    \end{align*}
    Combining \prettyref{eq:stirling-ref2} yields that $ |s(n+1,m+1)|=n!(\Theta(\frac{1}{n}))^m $, which coincides with \prettyref{eq:stirling} since $ n\asymp m $ is this range.
\end{proof}



\section*{Acknowledgment}
This research has been supported in part by the National Science Foundation under the grant agreement IIS-14-47879 and CCF-15-27105 and 
an NSF CAREER award CCF-1651588.
The authors thank Greg Valiant for helpful discussion on Lemma 5.14 in his thesis \cite{GV-thesis}.
We thank the anonymous referees for constructive comments which have helped to improve the presentation of the paper.

\newcommand{\etalchar}[1]{$^{#1}$}

\end{document}